\documentclass[10pt,psamsfonts]{amsart}
\usepackage{eucal}
\usepackage{amsmath,amssymb,amsfonts,amsthm,mathrsfs}
\usepackage{graphicx, array}
\usepackage{enumerate}
\usepackage{enumitem}
\usepackage{epsfig}
\usepackage{float}
\usepackage{color}
\usepackage{xcolor}
\colorlet{BLUE}{blue}
\floatplacement{figure}{H}
\usepackage[foot]{amsaddr}
\usepackage[T1]{fontenc}
\usepackage{enumitem}
\usepackage[colorlinks=true, urlcolor=blue, linkcolor=red]{hyperref}
\usepackage{mwe}
\usepackage{subcaption}
\usepackage[mathlines]{lineno}
\usepackage[hypcap=false]{caption}
\usepackage{listing}
\usepackage{hyperref}
\usepackage[encapsulated]{CJK}
\usepackage{caption}


\newtheorem{theorem}{Theorem}[section]
\newtheorem{lemma}{Lemma}[section]
\theoremstyle{corollary}

\newtheorem{proposition}{Proposition}[section]
\theoremstyle{definition}
\newtheorem{definition}{Definition}[section]
\newtheorem{example}{Example}[section]
\newtheorem{remark}{Remark}[section]
\numberwithin{equation}{section}

\def\longdelete#1{}

\begin{document}
\begin{CJK}{UTF8}{bsmi}
\title[Coexistence]{non-monotone traveling waves of the weak competition Lotka-Volterra system}

\author[Chen, C-C]{Chiun-Chuan Chen$^{1,3}$}
\address{$^1$ Department of Mathematics, National Taiwan University, Taiwan and National Center for Theoretical Sciences, Taiwan}
\email[Chen, C-C]{$^1$chchchen@math.ntu.edu.tw}

\author[Hsiao, T-Y]{Ting-Yang Hsiao$^2$}
\address{$^2$ Mathematics, Scuola Internazionale Superiore di Studi Avanzati (SISSA), Trieste, Italy}
\email[Hsiao, T-Y]{$^2$thsiao@sissa.it}

\author[Wang, S-C]{Shun-Chieh Wang$^3$}
\address{$^3$ (corresponding author) National Center for Theoretical Science, Taipei, Taiwan}
\email[Wang, S-C]{$^3$rjaywang1130@ncts.ntu.edu.tw}


\keywords{Lotka--Volterra, Traveling wave, Competition model, Schauder's fixed point theorem, Critical strong-weak case, Non-monotone solution}

\footnote{These authors contributed equally to this work.}

\date{\today}

\begin{abstract}
We investigate traveling wave solutions in the two-species reaction-diffusion Lotka--Volterra competition system under weak competition. For the strict weak competition regime $(b<a<1/c,\,d>0)$, we construct refined upper and lower solutions combined with the Schauder fixed point theorem to establish the existence of traveling waves for all wave speeds 
$s\geq s^*:=\max\{2,2\sqrt{ad}\}$, and provide verifiable sufficient conditions for the emergence of non-monotone waves. Such conditions for non-monotone waves have not been explicitly addressed in previous studies. It is interesting to point out that our result for non-monotone waves also holds for the critical speed case $s=s^*$. In addition, in the critical strong-weak competition case $(b<a=1/c,\,d>0)$, we rigorously prove, for the first time, the existence of front-pulse traveling waves. 
\end{abstract}

\maketitle

\section{Introduction}
In population biology, the Lotka--Volterra competition equations are widely accepted as a fundamental model for describing the interactions between competing species. By incorporating spatial diffusion into these equations, one arrives at the Lotka--Volterra competition-diffusion system, which provides a natural framework for studying the spatial propagation and coexistence of biological populations. Mathematically, such systems belong to the class of reaction-diffusion equations that admit traveling wave solutions, a central object in the study of spatial ecology and pattern formation.  

For the two-species case, the system can be written in the form
\begin{equation}\label{general eq}
\begin{cases}
u_t = u_{xx} + u(1-u-cv), \\
v_t = d v_{xx} + v(a-bu-v),
\end{cases}
\quad x \in \mathbb{R}, \ t>0,
\end{equation}
where $u(x,t)$ and $v(x,t)$ denote the population densities of two competing species, and $a,b,c,d>0$ are parameters reflecting the competition intensity and diffusion rate. Depending on the coefficients, the system admits several equilibria, among which the coexistence equilibrium plays a decisive role under the weak competition condition.

We consider the traveling wave ansatz for \eqref{general eq} of the form
\begin{equation}
    (u(x,t),v(x,t))=(u(\xi),v(\xi)), \quad \xi = x+st,
\end{equation}
where $s$ denotes the wave speed. We write $\partial_{\xi}$ as a prime. A direct calculation shows that $(u,v)$ satisfies
\begin{equation} \label{maineq}
\begin{cases}
    u'' - su' + u(1-u-cv) = 0, \\
    dv'' - sv' + v(a-bu-v) = 0,
\end{cases}
\quad \xi \in \mathbb{R}.
\end{equation}
A classical problem, first posed by Tang and Fife \cite{MP1980}, is to determine whether, in the strict weak competition regime
\begin{equation}\label{weak}
    b < a < \frac{1}{c},
\end{equation}
there exist traveling wave solutions $(u(\xi),v(\xi),s)$ connecting the extinction state $(0,0)$ to the coexistence equilibrium
\[
    (u^*,v^*)=\left(\frac{1-ac}{1-bc}, \,\frac{a-b}{1-bc}\right).
\]
Equivalently, Tang and Fife investigated solutions of \eqref{maineq} subject to the boundary conditions
\begin{equation} \label{asymptotic behavior}
    \lim_{\xi \to -\infty}(u,v)(\xi)=(0,0), \qquad \lim_{\xi \to +\infty}(u,v)(\xi)=(u^*,v^*).
\end{equation}
They proved that whenever the wave speed $s$ exceeds the critical threshold 
\[
    s^* := \max\{2,2\sqrt{ad}\},
\]
there exists a traveling wave with strictly monotone profiles and no positive wave satisfies (\ref{asymptotic behavior}) if $s<s^*$. This result provided the first rigorous demonstration that competitive interactions can generate spatial invasion dynamics governed by monotone wave fronts.  For the $n$ species related work, please see \cite{van1995existence}.

The weak competition regime has played one of the central roles in the study of traveling wave solutions for Lotka--Volterra competition-diffusion systems. Following the work of Tang and Fife, related research has made further progress. 
In particular, Ma \cite{S.Ma} introduced comparison principles and super-subsolution techniques that provided more flexible sufficient conditions for the existence of monotone traveling fronts under weak competition assumptions. 
Building on this framework, the work \cite{sonego2024control} designed a boundary control scheme driven by traveling waves, explicitly exploiting the monotonicity guaranteed in the weak competition setting. 
More recently, Chang and Wu \cite{chang2025bistable} extended this direction to three-species systems: by assuming weak competition between two species, they were able to preserve the monotonicity of two-species subsystems and consequently establish the existence of three-species traveling fronts. 
These contributions highlight that weak competition not only ensures coexistence equilibria but also provides the structural monotonicity necessary for rigorous analysis and further applications of traveling fronts.

By contrast, the study of non-monotone traveling waves remains comparatively limited. 
Hung \cite{hung2012exact} constructed exact traveling waves under specially chosen parameters, thereby demonstrating the existence of non-monotone solutions in certain cases. 
Lin and Ruan \cite{lin2014traveling} further observed the possibility of non-monotone waves and supported their existence by concrete examples and numerical simulations. 
Nevertheless, broad sufficient conditions for non-monotone fronts are still lacking, and fundamental issues, such as the construction of front-pulse type non-monotone solutions or the existence of non-monotone waves at the critical wave speed $s=s^*$, remain open. 
We also point out related developments: in \cite{chang2022series}, the authors investigated the stability of front-pulse solutions; in \cite{Yang}, Yang constructed front-pulse solutions for the case where one species dominates the other; and in \cite{hung2012exact}, exact front-pulse solutions were provided. 
These studies suggest that while monotone waves in weak competition have been better understood, a systematic description of non-monotone traveling waves is still far from complete.

We emphasize that the present work exclusively focuses on the weak competition regime. 
For the strong competition case, see \cite{Kanon} and references therein, where the existence of traveling waves was established and the wave speed was shown to depend analytically on the competition coefficients. 
In addition, Morita and Tachibana \cite{morita2009entire} constructed entire solutions in the strong competition setting, providing a broader dynamical picture of invasion phenomena beyond classical fronts.  
Further developments on spreading speeds and traveling fronts under strong competition can be found in 
\cite{JY,chang2023propagating,peng2021sharp,jiang2025spreading}, 
where qualitatively different behaviors such as bistability arise. For more details concerning the N-barrier maximum principle and its applications in reaction-diffusion systems, 
we refer to \cite{chen2016maximum,chen2016nonexistence,chen2016n,hung2016n,chen2020discrete,hsiao2022estimates}.

\subsection{Assumptions and notations}

Before stating our main results, we introduce several notions.  
The critical wave speed is defined by
\[
s^* := \max\{2,2\sqrt{ad}\}.
\]
We distinguish two parameter regimes:
\begin{itemize}
    \item \emph{Strict weak competition}: $b<a<\frac{1}{c}$, in which case the coexistence equilibrium $(u^*,v^*)$
    is strictly positive.
    \item \emph{Critical strong-weak} competition: $b<a=\frac{1}{c}$, where the equilibrium degenerates to $(0,v^*)$.
\end{itemize}
In the strict weak competition case, a front traveling wave refers to a solution of
\eqref{maineq} satisfying \eqref{asymptotic behavior}.  
In the critical strong-weak competition case, a front-pulse traveling wave is a solution of \eqref{maineq} satisfying the same boundary condition but with pulse-like behavior in one component.

\subsection{Main results}

We summarize our main theorems concerning the existence of traveling wave solutions to~\eqref{maineq} under the weak competition assumption.  
Our results cover both the strict weak competition regime $b<a<1/c$ and the critical strong-weak competition regime $b<a=1/c$.  
Depending on the wave speed $s$, we establish precise sufficient conditions for the existence of monotone and non-monotone fronts, as well as front-pulse solutions in the degenerate setting. First, we reprove the classic result of Tang and Fife via a different approach, i.e., the sub-sup solution method.

\begin{proposition}\label{maintheorem}  Given $d>0$. Assume $a,b,c$ satisfy $(\ref{weak})$. Then \eqref{maineq} and \eqref{asymptotic behavior} admit a traveling wave solution $(u,v)(\xi)$ if and only if $s \geq s^*$. 
\end{proposition}

Proposition~\ref{maintheorem} ensures the existence of a traveling wave solution, but does not address its monotonicity.  
We next turn to the refined question of when the profiles become non-monotone, and provide explicit sufficient conditions. Before that, we give some definitions.

\begin{definition} 
    We say a traveling wave solution $(u,v)(\xi)$ is monotone if and only if both $u$ and $v$ are monotone functions. We say a traveling wave solution $(u,v)(\xi)$ is non-monotone if and only if one of $u$ or $v$ is not a monotone function.
\end{definition}

\begin{definition} 
    We say $f(\xi):\mathbb{R} \to \mathbb{R}$ is eventually monotone at $+\infty$ (resp., $-\infty$) if there exists $M>0$ (resp., $-M<0$) such that $f(\xi)$ is a monotone function for all $\xi>M$ (resp., $x<-M$). We say $f(\xi)$ oscillates at $+\infty$ (resp., $-\infty$) if $f$ is not eventually monotone at $+\infty$ (resp., $-\infty$).
\end{definition}

We now turn to sufficient conditions for non-monotone solutions as follows.

\begin{theorem}\label{unonmonotone}
    For any $d>0$, $b<a$, $s \geq s^*$ there exists $\delta(a,b,s)>0$ such that under the condition $\delta(a,b,s)<c<\frac{1}{a}$, there exists a non-monotone traveling wave solution $(u,v)(\xi)$ satisfying $(\ref{maineq})$ and $(\ref{asymptotic behavior})$.
\end{theorem}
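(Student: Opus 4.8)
The strategy is to reduce non-monotonicity to one scalar inequality and then force it by a localized modification of the super/sub-solution scheme behind Theorem~\ref{maintheorem}. The key observation is that any front solution $(u,v)$ of \eqref{maineq}--\eqref{asymptotic behavior} with $u(\xi_0)>u^*$ at some $\xi_0$ is automatically non-monotone, since then $u(\xi_0)$ lies strictly above both endpoint values $u(-\infty)=0$ and $u(+\infty)=u^*$. (This is the converse side of Theorem~\ref{P 1}, which says a wave can be monotone only while it stays inside $(0,u^*)\times(0,v^*)$.) So it suffices to construct \emph{one} such front. Now $c<1/a$ is precisely the strict weak-competition condition \eqref{weak} (note $bc<ac<1$), so the machinery of Theorem~\ref{maintheorem} applies; and since $u^*=(1-ac)/(1-bc)\downarrow0$ while $v^*=(a-b)/(1-bc)\uparrow a$ as $c\uparrow1/a$, it is enough to push the trapped solution above a \emph{fixed} height $h_0>0$ at a single point.

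Concretely, I would take the super/sub quadruple $(\underline u,\underline v,\bar u,\bar v)$ furnished by the proof of Theorem~\ref{maintheorem}, translated so far to the right that $\bar v$ stays below a prescribed small constant on $\{\xi\le0\}$. There $c\bar v$ is exponentially small, so the $u$-subsolution inequality essentially reduces to the scalar KPP inequality $w''-sw'+w(1-w)\ge0$, whose front decays at the natural rate $\lambda_-=\tfrac12(s-\sqrt{s^2-4})$. I would then replace $\underline u$ by $\max\{\underline u,\underline u^{\mathrm b}\}$, where $\underline u^{\mathrm b}=\max\{0,\,\epsilon(e^{\lambda_-\xi}-e^{\lambda_2\xi})\}$ is the classical compactly supported two-exponential bump with $\lambda_-<\lambda_2<\min\{2\lambda_-,\lambda_-+\mu_-,\lambda_+\}$, $\mu_\pm=\tfrac1{2d}(s\pm\sqrt{s^2-4ad})$, and $\epsilon>0$ a fixed constant. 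Because $\mu_->a/s$ over the entire admissible range of $d$, the exponents $\lambda_2,\epsilon$ — hence the peak height $h_0$ of $\underline u^{\mathrm b}$ — can be taken to depend only on $a$ and $s$. Since $\underline u^{\mathrm b}$ keeps the decay rate $\lambda_-$ at $-\infty$, the ordering $\underline u^{\mathrm b}\le\bar u$ is recovered by enlarging the constant $M$ in $\bar u=\min\{Me^{\lambda_-\xi},\bar U\}$; enlarging $\underline u$ only eases the $\bar v$-supersolution inequality, and the enlarged $\bar u$ forces a recheck of the $\underline v$-subsolution inequality alone, which I would handle by appending the analogous correction $\underline v^{\mathrm f}=\max\{0,e^{\mu_-\xi}-e^{\mu_2\xi}\}$, $\mu_2\in(\mu_-,\min\{\lambda_-+\mu_-,\mu_+\})$. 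Schauder's theorem on the resulting order interval then produces a front $(u,v)$ with $u(\xi_{\max})\ge\underline u^{\mathrm b}(\xi_{\max})=h_0$.

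It remains to fix the threshold: set $\delta(a,b,s):=\dfrac{1-h_0}{a-h_0 b}$. Since $b<a$ and $0<h_0<1$ we have $0<\delta<1/a$, and an elementary computation gives $u^*=(1-ac)/(1-bc)<h_0$ exactly when $c\in(\delta,1/a)$. For such $c$ we get $u(\xi_{\max})\ge h_0>u^*=\lim_{\xi\to+\infty}u(\xi)$, so $u$ is non-monotone, as required.

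The main obstacle is checking that the modified quadruple is still a consistent super/sub quadruple, with two delicate points. (i) $\underline u^{\mathrm b}$ must be a subsolution of $u''-su'+u(1-u-c\bar v)\ge0$ on \emph{all} of its support: as $\xi\to-\infty$ the quadratic self-term and the coupling $c\,\underline u^{\mathrm b}\bar v$ decay at the faster rates $2\lambda_-$ and $\lambda_-+\mu_-$ and are therefore dominated by the positive principal term $\epsilon|\lambda_2^2-s\lambda_2+1|e^{\lambda_2\xi}$ — which is exactly what the constraints on $\lambda_2$ and the rightward translation provide — whereas near the right end of the support one argues instead with the uniform bounds $\underline u^{\mathrm b}\le h_0$ and $\bar v\le(\text{small})$, and these two regimes have to be arranged to overlap. (ii) Because the supersolution cap for $u$ is necessarily $\gtrsim 1$ wherever $\underline v$ is small, the enlarged $\bar u$ attains values of order $1$ on the long stretch between the $u$-bump and the (far-right) $v$-transition, so there the reaction in $d\underline v''-s\underline v'+\underline v(a-b\bar u-\underline v)$ can turn negative; this is rescued by $\underline v^{\mathrm f}$ because $\mu_2>\mu_-$ makes $e^{\mu_2\xi}$ dominant on that stretch (which lies in $\{\xi>0\}$ once the $v$-profile is far enough right), while $\bar u\to0$ restores the reaction to $\sim a\underline v$ as $\xi\to-\infty$. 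Finally, at the borderline speeds $s=2$ and $s=2\sqrt{ad}$ one must replace $e^{\lambda_-\xi}$ and $e^{\mu_-\xi}$ by the resonant barriers $\xi e^{\lambda_-\xi}$ and $\xi e^{\mu_-\xi}$ in the usual way; this is what also gives the result at the critical speed $s=s^*$.
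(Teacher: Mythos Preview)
Your strategy is correct and your threshold formula $\delta=(1-h_0)/(a-h_0 b)$ is exactly the right one, but you are working much harder than the paper does. The paper observes that the sub-solution $\underline u$ already built in the proof of Theorem~\ref{maintheorem} \emph{is} the bump you are trying to manufacture: on $\{\xi\le\xi_1\}$ it equals $e^{\lambda_1\xi}-q_1e^{\mu_1\lambda_1\xi}$, which by \eqref{maxoff} peaks at height $(1-\tfrac{1}{\mu_1})(q_1\mu_1)^{-1/(\mu_1-1)}$ before descending to the plateau $\delta_1$. The only $c$-dependence in that construction enters through the lower bound on $q_1$ in (A2), and since $ac<1$ one may simply fix $q_1=2/[-(\mu_1\lambda_1)^2+s(\mu_1\lambda_1)-1]$ once and for all; then $h_0:=\|\underline u\|_\infty$ is independent of $c$, and the non-monotonicity follows immediately from $u\ge\underline u$ and $u^*\to0$ as $c\uparrow 1/a$. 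The critical-speed case is handled the same way with $g$ from \eqref{generalsupsubmin} in place of $f$.

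So the paper's proof is a one-paragraph observation, whereas your route requires grafting on $\underline u^{\mathrm b}$, enlarging $\bar u$, and then repairing $\underline v$ with a further correction $\underline v^{\mathrm f}$ --- a cascade that, while it can be made to close, is entirely avoidable. What your approach does buy is some modularity: your bump $\underline u^{\mathrm b}$ is decoupled from the particular sub/super scheme used in Section~\ref{Section 2}, so it would transplant to other existence proofs for which the ``native'' sub-solution happens to be monotone. But within the present paper the simpler argument is clearly preferable: recognise that $\underline u$ is already non-monotone with a $c$-independent peak, and you are done.
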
 

\begin{theorem}\label{vnonmonotone}
    For any $d>0$, $a<\frac{1}{c}$, $s \geq s^*$ there exists $\delta(d,a,c,s)>0$ such that under the condition $\delta(d,a,c,s)<b<a$, there exists a non-monotone traveling wave solution $(u,v)(\xi)$ satisfying $(\ref{maineq})$ and $(\ref{asymptotic behavior})$.
\end{theorem}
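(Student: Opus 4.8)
The plan is to reuse the sub--supersolution plus Schauder fixed-point scheme from the proof of Theorem~\ref{maintheorem}, changing only the lower solution for the $v$-component so that it is forced to rise strictly above $v^*$ on a bounded interval. Any solution $(u,v)$ trapped between the modified sub- and supersolutions then satisfies $v(-\infty)=0<v^*$, $v(+\infty)=v^*$, but $v(\xi_0)>v^*$ at some finite $\xi_0$, so its $v$-profile cannot be monotone (this is, in effect, the failure of the box condition of Theorem~\ref{P 1}). The reason such a lower solution exists precisely when $b$ is close to $a$ is structural: since $ca<1$, the value $v^*=\frac{a-b}{1-bc}$ decreases from $a$ to $0$ as $b$ increases from $0$ to $a$; meanwhile, in the region where $u$ is still small the $v$-equation $dv''-sv'+v(a-bu-v)=0$ is a small perturbation of the monostable equation $dv''-sv'+v(a-v)=0$, whose fronts climb from $0$ toward the intrinsic carrying capacity $a$. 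For $s\geq s^*\geq 2\sqrt{ad}$ this produces a transient overshoot of $v$ past the small value $v^*$, and $\delta(a,c,s)$ is exactly the threshold of $b$ above which the overshoot can be certified by an explicit lower solution.

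In detail, I would proceed as follows. First, recall the Schauder setup: after a standard positivity shift, \eqref{maineq} becomes a fixed-point equation $(u,v)=\mathcal{T}(u,v)$ for an order-preserving integral operator on the closed convex set $\Gamma=\{(u,v)\in C(\mathbb{R})^2:\ \underline u\le u\le\bar u,\ \underline v\le v\le\bar v\}$, and a fixed point exists once $(\underline u,\underline v,\bar u,\bar v)$ is a quadruple of coupled ordered sub/supersolutions. Second, take $\bar u$, $\bar v$ and $\underline u$ as in Theorem~\ref{maintheorem}, with the single modification that $\bar v$ is enlarged so as to carry a hump above $v^*$ (going $0\to$ hump $\to v^*$), ensuring $\bar v\ge\underline v$ is preserved. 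Third, and at the heart of the matter, construct
\[
\underline v(\xi)=\bigl(q\,e^{\mu\xi}-Q\,e^{(\mu+\epsilon)\xi}\bigr)_+,
\]
where $\mu>0$ solves $d\mu^2-s\mu+a=0$ (real since $s\ge 2\sqrt{ad}$) and $0<\epsilon\ll1$; this vanishes at $-\infty$, is positive on a half-line, and has a unique interior maximum $v_{\max}=v_{\max}(q,Q,\epsilon)$ attained at some $\xi_0$. Choose the parameters so that $v_{\max}>v^*$ — possible precisely because $v^*$ is small — and verify the subsolution inequality $d\underline v''-s\underline v'+\underline v\,(a-b\bar u-\underline v)\ge 0$ on the support of $\underline v$; then check the remaining inequalities for $(\underline u,\bar u,\bar v)$ and the ordering $\underline v\le\bar v$. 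Fourth, apply Schauder's theorem, use interior elliptic estimates together with the squeeze near $\pm\infty$ to recover the asymptotics \eqref{asymptotic behavior}, and conclude: since $v(\xi_0)\ge\underline v(\xi_0)=v_{\max}>v^*=v(+\infty)$, the profile $v$ is non-monotone.

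The main obstacle I expect is the simultaneous verification of the coupled differential inequalities once $\underline v$ is allowed to exceed $v^*$. Giving $\underline v$ (and hence $\bar v$) a hump above $v^*$ feeds back into the $u$-inequalities through the competition term $-c\bar v$ in the equation for $\bar u$, and into the $v$-subsolution inequality through the term $-b\bar u$; so the exponents $\mu,\epsilon$, the amplitudes $q,Q$, the cap levels of $\bar u,\bar v$, and the bound $b>\delta(a,c,s)$ must all be tuned together rather than separately. Quantifying this tuning is precisely what defines $\delta(a,c,s)$, and by the symmetry $u\leftrightarrow v$, $1\leftrightarrow a$, $c\leftrightarrow b$ (with the diffusion rescaled) the argument mirrors the one already carried out for Theorem~\ref{unonmonotone}, which I would follow as a template.
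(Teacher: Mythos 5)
Your overall strategy is the one the paper actually uses: keep the sub-/super-solution and Schauder scheme of Theorem~\ref{maintheorem} unchanged, and observe that the maximum of the $v$-lower solution is a positive constant independent of $b$, while $v^*=\frac{a-b}{1-bc}\to 0$ as $b\to a$; once $\|\underline v\|_\infty>v^*$, any solution squeezed above $\underline v$ must exceed its limit $v(+\infty)=v^*$ at some finite point and therefore cannot be monotone. Your ansatz $q e^{\mu\xi}-Q e^{(\mu+\epsilon)\xi}$ is, up to reparametrization, exactly the paper's $a e^{\lambda_2\xi}-q_2 e^{\mu_2\lambda_2\xi}$ from condition $(A1)$--$(A3)$. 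Two of your anticipated obstacles are non-issues. First, you do not need to enlarge $\bar v$: the existence proof's $\bar v\equiv a$ for $\xi\ge 0$ already lies strictly above $v^*$ (since $ac<1$ forces $v^*<a$), so the box $[\underline v,\bar v]$ can accommodate the overshoot as is. Second, there is no dangerous feedback into the $\bar u$-inequality, because that inequality only uses $-c\underline v\le 0$, which holds no matter how large the hump of $\underline v$ is; likewise the $\underline v$-inequality sees only $-b\bar u\ge -b$. The quantitative content of your ``choose parameters so $v_{\max}>v^*$'' is simply that $q_2$ (hence $\|\underline v\|_\infty$) can be fixed independently of $b$ because $a^2+ab<2a^2$, after which $\delta(a,c,s)$ is defined by the single inequality $\frac{a-b}{1-bc}<\|\underline v\|_\infty$.

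The one genuine gap is the critical speed. Your lower solution requires $d(\mu+\epsilon)^2-s(\mu+\epsilon)+a<0$, i.e.\ that $\mu+\epsilon$ lie strictly between the two roots of $dr^2-sr+a=0$. When $ad\ge 1$ and $s=s^*=2\sqrt{ad}$ these roots coincide, so no admissible $\epsilon>0$ exists: the correction term then enters the differential inequality with the wrong sign near $-\infty$ and the two-exponential ansatz cannot be a subsolution. Since the theorem asserts $s\ge s^*$ and the critical case is one of the paper's advertised novelties, this must be repaired rather than waved at: the paper replaces the exponential difference by the algebraically corrected profile $(-h\xi-q\sqrt{-\xi})e^{\lambda\xi}$ of (\ref{generalsupsubmin}), whose maximum again depends only on $(\lambda,h,q)$ and not on $b$, so the same overshoot comparison goes through verbatim. (When $ad<1$ the roots of the $v$-quadratic remain distinct even at $s=s^*=2$, and your construction for $\underline v$ is fine there, provided you take the critical-case profiles for $\bar u,\underline u$ from Lemma~\ref{existofsupsubmin}.)
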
 

One can consider $\delta(a,b,s)$ in Theorem~\ref{unonmonotone} as a number close to but smaller than $1/a$ and $\delta(d,a,c,s)$ in Theorem~\ref{vnonmonotone} as a number close to but smaller than $a$. Theorems~\ref{unonmonotone} and~\ref{vnonmonotone} establish, for the strict weak competition regime, explicit sufficient conditions for the existence of non-monotone fronts when $s \geq s^*$.  
A natural next step is to investigate the borderline case $a=1/c$, namely the critical strong-weak competition regime.  
In this degenerate setting, the coexistence equilibrium reduces to a semi-trivial state, and the corresponding wave dynamics give rise to a new type of solution, which we call a front-pulse. Our final result characterizes the existence of such front-pulse solutions.

\begin{theorem} \label{nontrivialu}
    For any $b<a, \ ac=1, \ d>0$. If $s \geq s^*$, there exists a non-trivial front-pulse solution of $(\ref{degenerate1})$ with $u(\xi) \to 0$ as $|\xi| \to +\infty$.
\end{theorem}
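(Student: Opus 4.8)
The plan is to realize the front-pulse as a limit of genuine non-monotone fronts from the strict weak competition regime. Specialize \eqref{maineq} to $c=1/a$ (this is \eqref{degenerate1}); then the coexistence state degenerates to $(u^*,v^*)=(0,a)$, and the only equilibria are $(0,0)$, $(1,0)$, $(0,a)$. Choose $c_n\uparrow 1/a$; for $n$ large one has $b<a<1/c_n$ and $c_n>\delta(a,b,s)$, so Theorem~\ref{unonmonotone} provides a non-monotone traveling wave $(u_n,v_n)$ of \eqref{maineq} at these $c_n$ with the same speed $s\ge s^*$, connecting $(0,0)$ to $(u^*_{c_n},v^*_{c_n})=\bigl(\tfrac{1-ac_n}{1-bc_n},\tfrac{a-b}{1-bc_n}\bigr)$, where $u^*_{c_n}\to 0^+$ and $v^*_{c_n}\to a$. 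Normalize the translation of $(u_n,v_n)$ so that $v_n(0)=a/2$, which is possible for $n$ large since $v_n$ is a continuous profile running from $0$ to $v^*_{c_n}>a/2$.

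For the a priori control, the comparison principle gives $0<u_n<1$ and $0<v_n<a$ uniformly in $n$, so interior Schauder estimates yield uniform $C^{2,\alpha}_{\mathrm{loc}}$ bounds and, along a subsequence, $(u_n,v_n)\to(u_\infty,v_\infty)$ in $C^2_{\mathrm{loc}}(\mathbb{R})$, with $(u_\infty,v_\infty)$ a nonnegative bounded solution of \eqref{degenerate1} and $v_\infty(0)=a/2$. To pin the behavior at $\xi\to\pm\infty$ I would carry along explicit, $n$-independent barriers: for $v$, the exponential sub- and super-solutions built from $\lambda_v=\tfrac{s-\sqrt{s^2-4ad}}{2d}$ (with the standard $\xi e^{\lambda_v\xi}$ correction at $s=2\sqrt{ad}$), both vanishing at $-\infty$; for $u$, an upper solution behaving like $e^{\lambda_u^-\xi}$ near $-\infty$ with $\lambda_u^-=\tfrac{s-\sqrt{s^2-4}}{2}$ and like $K/\xi$ near $+\infty$ with $K>s$ — the algebraic tail being forced by the degeneracy of the $u$-linearization at $(0,a)$, whose eigenvalues are $0$ and $s$, so that a genuine profile necessarily satisfies $u(\xi)\sim C/\xi$ as $\xi\to+\infty$ rather than decaying exponentially. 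Using monotonicity of $v_n$ (for these waves $v_n$ remains monotone, the non-monotonicity concerning only $u$; otherwise one confines $v_n$ between the same $n$-independent barriers), $v_\infty$ is nondecreasing, so $v_\infty(-\infty)\in[0,a/2)$ and $v_\infty(+\infty)\in(a/2,a]$ are equilibrium values, forcing $v_\infty(-\infty)=0$ and $v_\infty(+\infty)=a$; the $u$-barrier at $-\infty$ rules out $(1,0)$, so $(u_\infty,v_\infty)(-\infty)=(0,0)$ and $(u_\infty,v_\infty)(+\infty)=(0,a)$. Hence $v_\infty$ is a front and $u_\infty$ is a pulse, and by the strong maximum principle $u_\infty\not\equiv 0$ would give $u_\infty,v_\infty>0$ everywhere.

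The remaining — and main — difficulty is non-triviality, $u_\infty\not\equiv 0$, and it is genuinely delicate: a soft sub-/super-solution argument cannot supply a positive lower bound for $u$. Indeed, whenever $u\ge 0$ and $v\ge 0$ one has $1-u-cv<1\le s^2/4$, and the substitution $u=e^{s\xi/2}\chi$ turns $u''-su'+(1-u-cv)u\ge 0$ into $\chi''+(1-u-cv-s^2/4)\chi\ge 0$ with a strictly negative zeroth–order coefficient, so a positive solution of this inequality vanishing at two interior points would be strictly convex between them, hence negative there — no nontrivial lower solution for the $u$-equation exists. One is therefore forced to extract a \emph{quantitative} non-degeneracy of the non-monotone waves as $c\uparrow 1/a$: revisiting the construction behind Theorem~\ref{unonmonotone}, I would show that for $c$ close to $1/a$ the profile $u_c$ overshoots to at least a fixed height $m_0>0$ independent of $c$, at a location remaining within bounded distance (after the normalization above) of the set $\{v_c=a/2\}$ — equivalently, that the bump of $u_c$ neither collapses nor escapes to $\pm\infty$ as $c\uparrow 1/a$. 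If it escaped to $+\infty$ the limit would be the trivial state $(1,0)$, if to $-\infty$ it would be $(0,a)$ or $(0,0)$ after recentering, and if the height collapsed it would be $(0,\phi_s)$ with $\phi_s$ the Fisher–KPP front of $dv''-sv'+v(a-v)=0$; all of these must be excluded. Granting the uniform overshoot, $u_\infty$ retains a bump of height $\ge m_0$ near the origin, so $u_\infty\not\equiv 0$ and the construction of a non-trivial front-pulse is complete. Establishing this uniform overshoot bound is the step I expect to be the crux of the proof.
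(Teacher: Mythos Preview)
Your overall strategy---pass to the limit $c_n\uparrow 1/a$ along the non-monotone fronts supplied by Theorem~\ref{unonmonotone} and use elliptic compactness---is exactly the paper's approach. But you misidentify where the difficulty lies, and the way you set things up creates a problem that the paper avoids entirely.

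The ``crux'' you flag, namely a uniform overshoot bound for $u_{c}$ as $c\uparrow 1/a$, is in fact free in the paper's construction. Go back to the proof of Theorem~\ref{unonmonotone}: the sub-solution $\underline{u}$ is built from parameters $\lambda_1,\mu_1,q_1$ that depend only on $s$ (and implicitly $a,b$ through the constraint $q_1>\max\{1,(1+ac)/\dots\}$, which is handled by taking $q_1$ large enough to dominate $1+1=2$, independent of $c<1/a$). In particular the bump of $\underline{u}$ has a fixed height $\|\underline{u}\|_\infty=(1-\tfrac{1}{\mu_1})(q_1\mu_1)^{-1/(\mu_1-1)}$ at a fixed location $\xi_M$, and only the flat tail level $\delta_1<1-ac$ shrinks as $c\uparrow 1/a$. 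Since the same (untranslated) pair $(\overline{u},\underline{u},\overline{v},\underline{v})$ traps every $u_{c_k}$, one has $u_{c_k}(\xi_M)\ge \underline{u}(\xi_M)>0$ for all $k$, and the limit $U$ inherits $U(\xi_M)\ge\|\underline{u}\|_\infty>0$ immediately. No normalization, no escape-of-mass argument, no quantitative overshoot estimate is needed. Your choice to recenter by $v_n(0)=a/2$ discards this anchoring and is precisely what makes the problem look hard; drop it.

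For the behavior at $+\infty$, the paper does not use barriers. Instead it runs a direct case analysis on whether $U$ and $V$ are eventually monotone or oscillatory, using the sign of the reaction terms at local extrema together with the fact that the only admissible equilibrium with $0\le U\le 1$, $\delta_2\le V\le a$ is $(0,a)$. Your barrier sketch (in particular the $K/\xi$ upper profile for $u$ and the unproved assertion that $v_n$ stays monotone) would need justification; the paper's route sidesteps both issues.
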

    
We pause to remark that if we consider the other degenerate case, $b=a<\frac{1}{c}$, we can still obtain a front-pulse solution of $(\ref{degenerate2})$ with $v(\xi) \to 0$ as $|\xi| \to +\infty$. Please see Proposition \ref{v pulse}. Besides classical traveling waves, we note recent progress on the critical strong-weak competition case for the related system; see \cite{cai2025lotka, alfaro2023lotka}.

We summarize our main results as follows: 
\begin{enumerate} 
    \item For $b<a<\frac{1}{c},\ d>0$, we construct refined upper and lower solutions and employ the Schauder fixed point theorem to reprove the existence of traveling waves for all $s \geq s^*=\max\{2,2\sqrt{ad}\}$ in \cite{MP1980}. Moreover, we establish verifiable sufficient conditions for non-monotone waves, that were previously missing in the literature. We also confirm the nonexistence of traveling waves for $s<s^*$. For the case $s>s^*$ and delayed models, please see \cite{WGS2006}.

    \item In the threshold case $s=s^*$, we provide the first theoretical construction of non-monotone traveling waves, filling a long-standing gap where only numerical or example-based evidence had been available.

    \item For the degenerate case $b<a=\tfrac{1}{c},\ d>0$, the classical lower-solution approach of Lin-Ruan \cite{lin2014traveling}, which relies on the Fisher-KPP asymptotics, breaks down due to the critical competition balance $a=\frac{1}{c}$. To overcome this obstacle, we introduce a refined lower solution tailored to this regime, which enables the rigorous construction of a very interesting new class of front-pulse traveling waves.
\end{enumerate}

\subsection{Organization} 
We organize the paper in the following order. In Section \ref{Section 2}, we introduce the functional setting and establish the framework of sub- and super-solutions. 
Using Schauder's fixed point theorem, we prove the existence of traveling wave solutions for $s \geq s^*=\max\{2,2\sqrt{ad}\}$ via constructing explicit sub- and super-solutions both for $s>s^*$ and the critical case $s=s^*$ and apply the shrinking-box argument to show convergence to the coexistence equilibrium at $+\infty$. In Section \ref{Section 3}, we develop verifiable sufficient conditions for traveling waves: we first present conditions guaranteeing monotone fronts (Proposition \ref{P 1}), and then derive explicit, checkable sufficient conditions for non-monotone waves for both cases $s\ge s^*$, thereby proving Theorems \ref{unonmonotone} and \ref{vnonmonotone}. We also establish a cooperative oscillation property at $+\infty$: oscillation of one component forces oscillation of the other. In Section \ref{Section 4}, we address the critical strong-weak competition regime by analyzing two degenerate reductions of the original system. 
Using the non-monotone wave constructions from Section \ref{Section 3}, together with compactness and elliptic estimates, we prove Theorem \ref{nontrivialu}: the existence of non-trivial front-pulse traveling waves. 

\section{The existence of the Traveling wave solution} \label{Section 2}

\subsection{Preliminaries}
In this section, we give some preliminaries for our main purpose. First, we define our function space.
\begin{equation} \label{solutionspace}
    X=\{(u,v): \mathbb{R} \to \mathbb{R}^2 \text{ is continuous function }, \ 0 \leq u(\xi) \leq 1, 0 \leq ,v(\xi) \leq a \}.
\end{equation}

Also, according to the source term of $(\ref{maineq})$, we define the functions as $$ F_1 (u, v) =\beta u + u(1-u-cv),$$
and
$$F_2(u,v)=\beta v + v(a-bu-v),$$
for some constant $\beta >0$.  Then it is easy to see that for any $(u,v) \in X$, $F(u,v)$ is uniformly Lipschitz in $X$. We choose $\beta >0$ large enough so that $\frac{ \partial F_1}{\partial u} \geq 0, \ \frac{ \partial F_2}{\partial v} \geq 0$ in $X$. For this $\beta>0$, we can rewrite equation $(\ref{maineq})$ in 
\begin{equation}\label{system}
    \begin{pmatrix}
        d_1 & 0\\
        0 & d_2
    \end{pmatrix}\begin{pmatrix}
        u'' \\
        v''
    \end{pmatrix}-s\begin{pmatrix}
        u' \\
        v'
    \end{pmatrix}-\beta \begin{pmatrix}
        u \\
        v
    \end{pmatrix}+\begin{pmatrix}
        F_1(u,v) \\
        F_2(u,v)
    \end{pmatrix}=\begin{pmatrix}
        0 \\
        0
    \end{pmatrix}
\end{equation}
where $d_1=1$, and $d_2=d$. Now we define $\lambda_{i1}<0<\lambda_{i2}$ to be the solution of the quadratic equation
$$d_i r^2-s r-\beta=0, \ i=1,2.$$
For given $(u,v) \in X$, we consider the operator $P=(P_1,P_2): X \to X$ defined as following
\begin{equation}\label{solutionmap}
    P_i(u,v)(\xi)=\frac{1}{d_i(\lambda_{i2}-\lambda_{i1})}\left[ \int_{-\infty}^\xi e^{\lambda_{i1}(\xi-s)}+\int_{\xi}^{+\infty} e^{\lambda_{i2}(\xi-s)} \right]F_i(u,v)(s)ds,
\end{equation}
for $i=1,2, \ \xi \in \mathbb{R}$. By the variation of constant formula it is easy to see that $P$ satisfies the equation $(\ref{system})$. Next, we give the definition of super-solution and sub-solution of $(\ref{maineq})$ as follows.

\begin{definition}
    The continuous functions $(\Bar{u},\Bar{v})$ and $(\underline{u},\underline{v})$ are called a pair of super and sub solutions of $(\ref{maineq})$ if
   \begin{equation}
      \begin{cases}\label{supsub}
        \Bar{u}''-s\Bar{u}'+\Bar{u}(1-\Bar{u}-c\underline{v}) \leq 0,\\
        \underline{u}''-s\underline{u}'+\underline{u}(1-\underline{u}-c\Bar{v}) \geq 0,\\
        d\Bar{v}''-s\Bar{v}'+\Bar{v}(a-b\underline{u}-\Bar{v}) \leq 0,\\
        d\underline{v}''-s\underline{v}'+\underline{v}(a-b\Bar{u}-\underline{v}) \geq 0,
    \end{cases} 
   \end{equation}
    
    for all $\xi \in \mathbb{R}\setminus D$ with $D=\{\xi_1,\xi_2,...,\xi_N\}$.
\end{definition}

Finally, we present the most important theorem of existence of the solution in the next subsection.

\subsection{The Existence Theorem} By a standard argument such as that in, e.g., \cite{S.Ma}, \cite{YLHGLin}, we have the following existence theorem for the system (\ref{maineq}). The key idea is to use the super-solution and sub-solution to construct a weight subspace such that $P=(P_1,P_2)$ is a continuous, compact self-map, and apply Schauder's fixed point theory. We omit the details here.

\begin{lemma} \label{existence}
    Let $s>0$. Suppose $(\ref{maineq})$ has a pair of positive super-solution and sub-solution in $X$ satisfying

\

    (1): $\Bar{u}(\xi) \geq \underline{u}(\xi)$, \ $\Bar{v}(\xi) \geq \underline{v}(\xi)$ for all $\xi \in \mathbb{R}$.

\

    (2): $\Bar{u}'(\xi -) \geq \Bar{u}'(\xi +)$, \ $\Bar{v}'(\xi -) \geq \Bar{v}'(\xi +)$, \ $\underline{u}'(\xi -) \leq \underline{u}'(\xi +)$, \ $\underline{v}'(\xi -) \leq \underline{v}'(\xi +)$, for all $\xi \in D,$ where
    $$\overline{u}'(\xi \pm):=\lim_{z \to \xi \pm}\overline{u}'(z), \ \underline{u}'(\xi \pm):=\lim_{z \to \xi \pm}\underline{u}'(z),$$
    $$\overline{v}'(\xi \pm):=\lim_{z \to \xi \pm}\overline{v}'(z), \ \underline{v}'(\xi \pm):=\lim_{z \to \xi \pm}\underline{v}'(z).$$

Then $(\ref{maineq})$ has a positive solution $(u,v)$ such that $\underline{u}(\xi) \leq u(\xi) \leq \Bar{u}(\xi)$, $\underline{v}(\xi) \leq v(\xi) \leq \Bar{v}(\xi)$ for all $\xi \in \mathbb{R}$.
\end{lemma}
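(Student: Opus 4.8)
The plan is to invoke Schauder's fixed point theorem for the operator $P=(P_1,P_2)$ defined in~\eqref{solutionmap}, restricted to a suitably chosen closed convex bounded subset of $X$ that is both forward-invariant under $P$ and on which $P$ is compact. Concretely, I would introduce the closed convex set
\[
  \Gamma := \bigl\{ (u,v) \in X : \underline{u}(\xi) \le u(\xi) \le \overline{u}(\xi),\ \underline{v}(\xi) \le v(\xi) \le \overline{v}(\xi)\ \text{for all } \xi \in \mathbb{R} \bigr\},
\]
equipped with the topology of uniform convergence on compact subsets of $\mathbb{R}$ (or, equivalently, the weighted sup-norm $\| (u,v) \|_\mu := \sup_{\xi} e^{-\mu|\xi|}(|u(\xi)|+|v(\xi)|)$ for a small $\mu>0$, so that $\Gamma$ becomes a closed bounded convex subset of a Banach space and classical Schauder applies). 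The set $\Gamma$ is nonempty by hypothesis~(1). The core of the argument is then the following three claims, carried out in order.

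\emph{Step 1: $P$ maps $\Gamma$ into $\Gamma$.} Using the choice of $\beta$ (so that $F_1$ is nondecreasing in $u$ and $F_2$ is nondecreasing in $v$, and both $F_i$ are monotone in the competing variable in the relevant direction), I would show that if $(u,v)\in\Gamma$ then $P_1(u,v) \le P_1(\overline{u},\underline{v})$ and $P_1(u,v) \ge P_1(\underline{u},\overline{v})$, and similarly for $P_2$. It then suffices to verify $P_1(\overline{u},\underline{v}) \le \overline{u}$ and $P_1(\underline{u},\overline{v}) \ge \underline{u}$ (and the $v$-analogues). This is where the super/sub-solution inequalities~\eqref{supsub} enter: setting $w := \overline{u} - P_1(\overline{u},\underline{v})$, a direct computation using the variation-of-constants representation shows that $w$ satisfies $d_1 w'' - s w' - \beta w \le 0$ on $\mathbb{R}\setminus D$ with the correct one-sided jump conditions from hypothesis~(2) at each $\xi_i \in D$ (the kink inequalities $\overline{u}'(\xi-)\ge\overline{u}'(\xi+)$ guarantee that the distributional second derivative picks up only nonpositive Dirac masses), and $w$ is bounded; the maximum principle for the operator $d_1\partial_\xi^2 - s\partial_\xi - \beta$ on $\mathbb{R}$ then forces $w \ge 0$. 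The other three inequalities are symmetric.

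\emph{Step 2: $P$ is continuous and compact on $\Gamma$.} Continuity in the weighted norm follows from the uniform Lipschitz bound on $F=(F_1,F_2)$ over $X$ together with the exponential kernels $e^{\lambda_{i1}(\xi-\cdot)}$, $e^{\lambda_{i2}(\xi-\cdot)}$, which are integrable and give a dominated-convergence argument. For compactness, I would note that for $(u,v)\in\Gamma$ the functions $P_i(u,v)$ satisfy, by differentiating~\eqref{solutionmap}, uniform $C^1$ (indeed $C^2$ via the equation) bounds on every compact set; hence $P(\Gamma)$ is equicontinuous and pointwise bounded, and Arzel\`a–Ascoli on a sequence of exhausting compact intervals, combined with a diagonal extraction, yields precompactness in the local-uniform topology — or, in the weighted-norm formulation, one checks directly that the weight $e^{-\mu|\xi|}$ absorbs the slow growth and delivers norm-precompactness. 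Schauder's theorem then produces a fixed point $(u,v)\in\Gamma$, which by the variation-of-constants identity solves~\eqref{system}, hence~\eqref{maineq}, and automatically obeys the pinching $\underline{u}\le u\le\overline{u}$, $\underline{v}\le v\le\overline{v}$. Positivity of $(u,v)$ follows since $\underline{u},\underline{v}$ are positive super/sub-solutions (or, if they only satisfy $\underline{u},\underline{v}\ge 0$, from the strong maximum principle applied to the linear equation satisfied by $u$ and $v$).

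\emph{Main obstacle.} The one genuinely delicate point is Step~1, specifically the bookkeeping of the corner conditions at the finite set $D$: one must verify that the differential inequality for $w$ holds not merely pointwise on $\mathbb{R}\setminus D$ but in the distributional sense across each $\xi_i$, so that the global maximum principle is legitimately applicable. This is exactly what hypothesis~(2) is engineered to ensure — the inequalities on the one-sided derivatives make the singular part of $w''$ a sum of nonpositive point masses — but the verification requires care with the signs coming from the representation formula~\eqref{solutionmap}. Everything else (the Lipschitz estimate, Arzel\`a–Ascoli, the identification of fixed points with solutions) is routine, which is why it is reasonable to follow the literature and omit the full details, as the text does.
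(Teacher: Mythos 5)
Your proposal is correct and follows essentially the same route the paper takes: the paper itself omits the details, citing the standard argument of Ma and others, but explicitly describes the strategy as building a weighted subspace on which $P=(P_1,P_2)$ is a continuous compact self-map and applying Schauder's fixed point theorem, which is exactly your $\Gamma$ with the weighted sup-norm, the invariance step via the quasi-monotone structure of $F_1,F_2$ and the corner conditions in hypothesis (2), and the Arzel\`a--Ascoli compactness argument. Nothing in your outline deviates from or adds to the intended proof in a way worth flagging.
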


\subsection{The super-solution and sub-solution of $s>s^*$} We shall derive the existence of the traveling wave solution for $s>s^*=\max\{2,2\sqrt{ad}\}$. First of all we fix $s>s^*$, and define the following positive constants.
\begin{align}\label{lambda}
    \lambda_1=\frac{s-\sqrt{s^2-4}}{2}, \ \lambda_2=\frac{s-\sqrt{s^2-4ad}}{2d}, \ \lambda_3=\frac{s+\sqrt{s^2-4}}{2}, \ \lambda_4=\frac{s+\sqrt{s^2-4ad}}{2d}.
\end{align}
In fact, $\lambda_1$ and $\lambda_3$ are the positive solutions of
$$x^2-sx+1=0.$$
The $\lambda_2$ and $\lambda_4$ are the positive solution of
$$dx^2-sx+a=0.$$
According to Lemma \ref{existence}, we need to construct a pair of super and sub solutions of $(\ref{maineq})$. 

First, given any $\lambda>0, \ \mu, q >1$, it is easy to check that the function 
\begin{equation}\label{generalsupsub}
    f(\xi)=e^{\lambda \xi}-q e^{\mu \lambda \xi}
\end{equation}
has a unique zero $\xi_0=\frac{-\log{q}}{(\mu-1)\lambda}<0$ and a unique maximum point at $\xi_M = \frac{-\log{q \mu }}{(\mu-1)\lambda}<\xi_0$. Moreover, we have
\begin{equation}\label{maxoff}
    ||f||_{\infty}=f(\xi_M)=(1-\frac{1}{\mu})(q\mu)^{\frac{-1}{\mu-1}}.
\end{equation}
Since $f$ is continuous on $\mathbb{R}$ and positive on $(-\infty,\xi_0)$, for any small $\delta>0$ there exists $\Tilde{\xi} \in (\xi_M,\xi_0)$ such that $f(\Tilde{\xi})=\delta$ with $f'(\Tilde{\xi})<0$.

Next, we move to the second step. We sequentially select the constants $\mu_1, \mu_2, q_1, q_2, \delta_1$ and $\delta_2$ based on the following $(A1)-(A3)$.
\begin{align}
    &(A1) \text{ Let } \mu_1 \in (1,  \min\{\frac{\lambda_3}{\lambda_1},\frac{\lambda_1+\lambda_2}{\lambda_1},2\}), \  \mu_2 \in (1, \min\{ \frac{\lambda_4}{\lambda_2}, \frac{\lambda_1+\lambda_2}{\lambda_2},2\}) \text{ are very close to }1.\notag\\
    &(A2) \text{ Let } \ q_1 > \max\{1, \frac{1+ac}{-(\mu_1\lambda_1)^2+s(\mu_1 \lambda_1)-1}\}, \ q_2 > \max\{1, \frac{a^2+ab}{-d(\mu_2\lambda_2)^2+s(\mu_2 \lambda_2)-a}\}.\notag\\
    &(A3) \text{ Pick } \delta_1>0 \text{ such that } 0<\delta_1<\min\{1-ac,||f_1||_{\infty}\}, \text{ where }f_1(\xi) \text{ is defined by 
 }\notag\\
    & \text{(\ref{generalsupsub}) in parameters }(\lambda_1,\mu_1,q_1). \text{ Pick } \delta_2>0 \text{ such that } 0<\delta_2<\min\{a-b,||f_2||_{\infty}\},\notag\\
    &\text{ where }f_2(\xi) \text{ is defined by 
 } f_2(\xi)=ae^{\lambda_2 \xi}-q_2e^{\mu_2\lambda_2 \xi}.\text{ Note that there exists }\notag\\
    &\xi_i \in (\xi^i_M,\xi^i_0)\text{ such that }f(\xi_i)=\delta_i, i=1,2. \notag
\end{align}

Finally, we introduce the functions $\overline{u}(\xi), \underline{u}(\xi),\overline{v}(\xi), \underline{v}(\xi)$ as follows:

\begin{center}
$\begin{array}{ll}
\overline{u}(\xi)         =    
\begin{cases}
1 \ & \text{ if } \xi \geq 0,\\
e^{\lambda_1 \xi} \ & \text{ if } \xi \leq 0,
\end{cases} & \\
\underline{u}(\xi)       =      
\begin{cases}
\delta_1 \ &  \text{ if } \xi \geq \xi_1,\\
e^{\lambda_1 \xi}-q_1e^{\mu_1\lambda_1 \xi} \ &   \text{ if } \xi \leq \xi_1,
\end{cases} & \\
\overline{v}(\xi)         =    
\begin{cases}
a \ & \text{ if } \xi \geq 0,\\
ae^{\lambda_2 \xi} \ & \text{ if } \xi \leq 0,
\end{cases} & \\
\underline{v}(\xi)       =      
\begin{cases}
\delta_2 \ &  \text{ if } \xi \geq \xi_2,\\
ae^{\lambda_2 \xi}-q_2e^{\mu_2\lambda_2 \xi} \ &   \text{ if } \xi \leq \xi_2,
\end{cases} & \\
\end{array}$
\end{center}
where $\xi_i<0$ is a point such that $\underline{u}$ and $\underline{v}$ are continuous functions on $\mathbb{R}$. It is easy to see that $(\overline{u},\underline{u},\overline{v},\underline{v})$ meets the assumption $(1)$ and $(2)$ in Lemma \ref{existence}. In fact, we have the following theorem.

\begin{lemma}\label{existofsupsub}
    For each $s > s^*$, there exists a positive solution $(u,v)(\xi)$ of $(\ref{maineq})$ with $\underline{u}(\xi) \leq u(\xi) \leq \overline{u}(\xi)$ and $\underline{v}(\xi) \leq v(\xi) \leq \overline{v}(\xi)$ for all $\xi \in \mathbb{R}$ such that 
    $$\lim_{\xi \to -\infty}(u,v)=(0,0).$$
\end{lemma}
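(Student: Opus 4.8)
The plan is to verify that the explicitly constructed quadruple $(\overline{u},\underline{u},\overline{v},\underline{v})$ is a genuine pair of super- and sub-solutions in the sense of \eqref{supsub}, so that Lemma~\ref{existence} applies directly and produces a positive solution $(u,v)$ trapped between them; the asymptotic behavior at $-\infty$ then follows by squeezing. First I would record the exceptional set $D$, which here consists of the finitely many points where the piecewise definitions are glued, namely $\xi=0$ for $\overline{u},\overline{v}$ and $\xi=\xi_1,\xi_2$ for $\underline{u},\underline{v}$; away from $D$ each function is smooth, so it suffices to check the four differential inequalities on each piece, and the jump conditions (2) of Lemma~\ref{existence} at the points of $D$. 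The jump conditions are immediate from the constructions: at $\xi=0$ the function $\overline{u}$ goes from the exponential $e^{\lambda_1\xi}$ (slope $\lambda_1>0$) to the constant $1$ (slope $0$), so $\overline{u}'(0-)\geq\overline{u}'(0+)$, and similarly for $\overline{v}$; at $\xi=\xi_1$ the function $\underline{u}$ goes from $f_1(\xi)=e^{\lambda_1\xi}-q_1 e^{\mu_1\lambda_1\xi}$, which has $f_1'(\xi_1)<0$ by the choice of $\xi_1\in(\xi_M^1,\xi_0^1)$ in $(A3)$, to the constant $\delta_1$ (slope $0$), so $\underline{u}'(\xi_1-)\leq\underline{u}'(\xi_1+)$, and likewise for $\underline{v}$. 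Condition (1), $\overline{u}\geq\underline{u}$ and $\overline{v}\geq\underline{v}$, is clear since $\overline{u}\geq e^{\lambda_1\xi}\geq f_1(\xi)$ always and $\underline{u}\leq\delta_1<1$ on $\xi\geq\xi_1$ (and the analogous estimates for $v$, using $\delta_2<a$).

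Next I would grind through the four differential inequalities, piece by piece. For $\overline{u}$: on $\xi\geq 0$ we have $\overline{u}\equiv 1$, so $\overline{u}''-s\overline{u}'+\overline{u}(1-\overline{u}-c\underline{v})=-c\underline{v}\leq 0$ since $\underline{v}\geq 0$; on $\xi\leq 0$ we have $\overline{u}=e^{\lambda_1\xi}$ with $\lambda_1^2-s\lambda_1+1=0$, so $\overline{u}''-s\overline{u}'+\overline{u}(1-\overline{u}-c\underline{v})=e^{\lambda_1\xi}(1-e^{\lambda_1\xi}-c\underline{v})-e^{\lambda_1\xi}= -e^{\lambda_1\xi}(e^{\lambda_1\xi}+c\underline{v})\leq 0$. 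The super-solution inequality for $\overline{v}$ is analogous using $d\lambda_2^2-s\lambda_2+a=0$ and $\underline{u}\geq 0$. For the sub-solution $\underline{u}$: on $\xi\geq\xi_1$ we have $\underline{u}\equiv\delta_1$ so the expression is $\delta_1(1-\delta_1-c\overline{v})\geq\delta_1(1-\delta_1-ca)$, which is $\geq 0$ precisely because $\delta_1<1-ac$ by $(A3)$ (here $\overline{v}\leq a$); on $\xi\leq\xi_1$ we have $\underline{u}=e^{\lambda_1\xi}-q_1 e^{\mu_1\lambda_1\xi}$, and using $\lambda_1^2-s\lambda_1+1=0$ one computes $\underline{u}''-s\underline{u}'+\underline{u}(1-\underline{u}-c\overline{v})=-q_1 e^{\mu_1\lambda_1\xi}\bigl((\mu_1\lambda_1)^2-s(\mu_1\lambda_1)+1\bigr)+\underline{u}(-\underline{u}-c\overline{v})$. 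Since $\mu_1\in(1,\lambda_3/\lambda_1)$ by $(A1)$, the quadratic $x^2-sx+1$ is negative at $x=\mu_1\lambda_1$ (it vanishes at $\lambda_1$ and $\lambda_3$), so $-q_1 e^{\mu_1\lambda_1\xi}\bigl((\mu_1\lambda_1)^2-s(\mu_1\lambda_1)+1\bigr)>0$, and in fact $\geq q_1 e^{\mu_1\lambda_1\xi}\bigl(-(\mu_1\lambda_1)^2+s(\mu_1\lambda_1)-1\bigr)$; the remaining term is bounded below by $-\underline{u}(\underline{u}+c\overline{v})\geq -e^{\lambda_1\xi}(e^{\lambda_1\xi}+ca)\geq -e^{\lambda_1\xi}(1+ca)\geq -(1+ac)e^{\lambda_1\xi}$, and since $\mu_1<(\lambda_1+\lambda_2)/\lambda_1$ forces $\mu_1\lambda_1<\lambda_1+\lambda_2$, a cruder bound $e^{\lambda_1\xi}\le e^{\mu_1\lambda_1\xi}$ need not hold for $\xi<0$; rather, on $\xi\le\xi_1<0$ one bounds $e^{\lambda_1\xi}\leq 1$ and compares with the positive term via the choice of $q_1$ in $(A2)$, namely $q_1>(1+ac)/\bigl(-(\mu_1\lambda_1)^2+s(\mu_1\lambda_1)-1\bigr)$. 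This is exactly the inequality arranged in $(A2)$, so the sub-solution inequality for $\underline{u}$ holds. The sub-solution inequality for $\underline{v}$ is entirely parallel, using $\delta_2<a-b$, $\mu_2\in(1,\lambda_4/\lambda_2)$, and the choice of $q_2$ in $(A2)$.

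Having verified all hypotheses of Lemma~\ref{existence}, I conclude that \eqref{maineq} has a positive solution $(u,v)$ with $\underline{u}\leq u\leq\overline{u}$ and $\underline{v}\leq v\leq\overline{v}$ on $\mathbb{R}$. Finally, as $\xi\to-\infty$ we have $\overline{u}(\xi)=e^{\lambda_1\xi}\to 0$ and $\overline{v}(\xi)=ae^{\lambda_2\xi}\to 0$, while $\underline{u},\underline{v}\geq 0$ (indeed $\underline{u}(\xi)=f_1(\xi)\ge 0$ for $\xi\le\xi_1<\xi_0^1$, and similarly for $\underline{v}$), so the squeeze theorem gives $\lim_{\xi\to-\infty}(u,v)(\xi)=(0,0)$, completing the proof. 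I expect the main obstacle to be purely bookkeeping: carefully tracking the signs of the quadratic forms $x^2-sx+1$ and $dx^2-sx+a$ evaluated at $\mu_i\lambda_i$ (which is where conditions $(A1)$ enter), and confirming that the cross-terms $-\underline{u}(\underline{u}+c\overline{v})$ and $-\underline{v}(b\overline{u}+\underline{v})$ are dominated by the positive $q_i$-terms uniformly for $\xi\le\xi_i$ — this domination is precisely what $(A2)$ was designed to guarantee, but one must be attentive that the relevant comparison is on the half-line $\xi\le\xi_i$ where $e^{\lambda_i\xi}\le 1$ rather than on all of $(-\infty,0]$.
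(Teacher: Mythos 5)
Your overall strategy is exactly the paper's: check that the explicit quadruple satisfies \eqref{supsub} together with conditions (1)--(2) of Lemma~\ref{existence}, invoke that lemma, and squeeze at $-\infty$. The jump conditions, the ordering, both super-solution inequalities, and the constant pieces of the sub-solutions are handled correctly. However, there is a genuine gap in your verification of the sub-solution inequality for $\underline{u}$ on $\xi\le\xi_1$. You bound the cross term by $-\underline{u}(\underline{u}+c\overline{v})\ge -(1+ac)e^{\lambda_1\xi}$ using the crude estimate $\overline{v}\le a$, and then propose to dominate $-(1+ac)e^{\lambda_1\xi}$ by the positive term $q_1\bigl(-(\mu_1\lambda_1)^2+s\mu_1\lambda_1-1\bigr)e^{\mu_1\lambda_1\xi}$ after bounding $e^{\lambda_1\xi}\le 1$. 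This cannot work: the required inequality is $q_1C\ge(1+ac)e^{(1-\mu_1)\lambda_1\xi}$, and since $\mu_1>1$ the exponent $(1-\mu_1)\lambda_1\xi\to+\infty$ as $\xi\to-\infty$, so no choice of $q_1$ suffices. Equivalently, $e^{\lambda_1\xi}$ decays \emph{more slowly} than $e^{\mu_1\lambda_1\xi}$ as $\xi\to-\infty$, so a term of order $e^{\lambda_1\xi}$ can never be absorbed by one of order $e^{\mu_1\lambda_1\xi}$.

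The fix --- and the structural point of the construction that your bound discards --- is that on $\xi\le\xi_1<0$ the super-solution is $\overline{v}(\xi)=ae^{\lambda_2\xi}$, not merely $\overline{v}\le a$. Using this, $-c\,\overline{v}\,\underline{u}\ge -ac\,e^{(\lambda_1+\lambda_2)\xi}\ge -ac\,e^{\mu_1\lambda_1\xi}$ because $(A1)$ guarantees $\mu_1\lambda_1<\lambda_1+\lambda_2$ and $\xi<0$; similarly $-\underline{u}^2\ge -e^{2\lambda_1\xi}\ge -e^{\mu_1\lambda_1\xi}$ because $\mu_1<2$. Every term is then a multiple of $e^{\mu_1\lambda_1\xi}$ and the choice of $q_1$ in $(A2)$ closes the inequality --- this is why the thresholds $\frac{\lambda_1+\lambda_2}{\lambda_1}$ and $2$ appear in $(A1)$ at all. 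The same correction is needed in the ``entirely parallel'' inequality for $\underline{v}$, where you must use $\overline{u}(\xi)=e^{\lambda_1\xi}$ rather than $\overline{u}\le 1$ to control $-b\,\overline{u}\,\underline{v}$.
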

\begin{proof}
    
By Lemma \ref{existence}, it is sufficient to check that $(\overline{u},\underline{u},\overline{v},\underline{v})$ satisfies the definition of sup-sub solutions. It is easy to check that $ \overline{u}(\xi), \underline{u}(\xi), \overline{v}(\xi),\underline{v}(\xi)$ satisfy the conditions of Lemma \ref{existence}. Therefore, we only need to check the differential inequalities. Without loss of generality, we check the differential inequalities of $\overline{u}(\xi)$ and $\underline{u}(\xi)$. The proofs of the other two differential inequalities are similar and are omitted for brevity. First, we claim that
    $$\overline{u}''(\xi)-s\overline{u}'(\xi)+\overline{u}(\xi)(1-\overline{u}(\xi)-c\underline{v}(\xi))\leq 0$$
    holds for $\xi \in \mathbb{R} \setminus \{0\}.$ For $\xi>0$, $\overline{u}(\xi)=1$ and 
    $$\overline{u}''(\xi)-s\overline{u}'(\xi)+\overline{u}(\xi)(1-\overline{u}(\xi)-c\underline{v}(\xi))=-c\underline{v}(\xi) \leq 0.$$
    When $\xi < 0$, $\overline{u}(\xi)=e^{\lambda_1 \xi}$ and
    \begin{align}
        &\overline{u}''(\xi)-s\overline{u}'(\xi)+\overline{u}(\xi)(1-\overline{u}(\xi)-c\underline{v}(\xi))\notag\\
        &=e^{\lambda_1 \xi}[\lambda_1^2-s\lambda_1+1]-\overline{u}^2(\xi)-c\overline{u}(\xi)\underline{v}(\xi)= -\overline{u}^2(\xi)-c\overline{u}(\xi)\underline{v}(\xi) \leq 0.\notag
    \end{align}
    Next, we claim that 
    $$\underline{u}''(\xi)-s\underline{u}'(\xi)+\underline{u}(\xi)(1-\underline{u}(\xi)-c\overline{v}(\xi))\geq 0$$
    holds for $\xi \in \mathbb{R} \setminus \{\xi_1\}.$ In the case $\xi > \xi_1$, we have $\underline{u}(\xi)=\delta_1$ and
    \begin{align}
        \underline{u}''(\xi)-s\underline{u}'(\xi)+\underline{u}(\xi)(1-\underline{u}(\xi)-c\overline{v}(\xi))\geq \delta_1 (1-\delta_1-ac) \geq 0.\notag
    \end{align}
    
    For $\xi < \xi_1$, we have $\underline{u}(\xi)=e^{\lambda_1 \xi}-q_1e^{\mu_1\lambda_1 \xi}$ and 
    \begin{align}
        &\underline{u}''(\xi)-s\underline{u}'(\xi)+\underline{u}(\xi)(1-\underline{u}(\xi)-c\overline{v}(\xi))\notag\\
        &=-q_1[(\mu_1\lambda_1)^2-s(\mu_1\lambda_1)+1]e^{\mu_1 \lambda_1 \xi}-(e^{\lambda_1 \xi}-q_1 e^{\mu_1 \lambda_1 \xi})^2-ac(e^{\lambda_1 \xi}-q_1 e^{\mu_1 \lambda_1 \xi}) e^{\lambda_2 \xi}\notag\\
        & \geq -q_1[(\mu_1\lambda_1)^2-s(\mu_1\lambda_1)+1]e^{\mu_1 \lambda_1 \xi}-e^{2 \lambda_1 \xi}-ac e^{(\lambda_1+\lambda_2) \xi}\notag\\
        &\geq e^{\mu_1 \lambda_1 \xi}[-q_1[(\mu_1\lambda_1)^2-s(\mu_1\lambda_1)+1]-e^{((2-\mu_1)\lambda_1) \xi}-ac e^{((\lambda_1+\lambda_2)-\mu_1 \lambda_1)\xi}]\notag\\
        &\geq e^{\mu_1 \lambda_1 \xi}[-q_1[(\mu_1\lambda_1)^2-s(\mu_1\lambda_1)+1]-1-ac]>0,\notag
    \end{align}
    where we use the definition of $\lambda_1, \mu_1$ and $q_1$ in $(A1)-(A3)$. 
    
    Finally, the limit
    \begin{equation}
        \lim_{\xi \to -\infty}(u,v)=(0,0),\notag
    \end{equation}
    can be proved by Squeeze Theorem. Therefore, the proof of this theorem has been complete.
\end{proof}

\subsection{The super-solution and sub-solution of $s=s^*$} We shall derive the existence of the traveling wave solution for $s=s^*\}$. First of all we set $s=s^*$. Since the system is symmetry we only need to consider the case $ad \leq 1$. Another case, with a similar construction, will be omitted here. Consider $ad \leq 1$, let $s=s^*$, define the following positive constants

\begin{align}\label{lambdahat}
    \hat{\lambda}_1=\hat{\lambda}_3=\frac{s}{2}, \ \hat{\lambda}_2=\frac{s-\sqrt{s^2-4ad}}{2d}, \  \hat{\lambda}_4=\frac{s+\sqrt{s^2-4ad}}{2d}.
\end{align}

Similar to previous section, given any $\lambda>0, h>0, \ q >1$, it is easy to check that the non-negative function 
\begin{equation}\label{generalsupsubmin}
    g(\xi)=(-h \xi-q \sqrt{-\xi})e^{\lambda \xi}, \ \ \xi \leq -(\frac{q}{h})^2
\end{equation}
has a unique zero $\hat{\xi}_0=-(\frac{q}{h})^2$ and a unique maximum point at $\hat{\xi}_M <\hat{\xi}_0$. Since $g$ is continuous and positive on $(-\infty,\hat{\xi}_0)$, for any small $\hat{\delta}>0$ there exists $\hat{\xi} \in (\hat{\xi}_M,\hat{\xi}_0)$ such that $g(\hat{\xi})=\hat{\delta}$ with $g'(\hat{\xi})<0$. Note that the unique zero $\hat{\xi}_0 \to -\infty$ when $q \to +\infty$.

We will divide the construction into two cases. 

Case 1. $ad=1$.

We sequentially select the constants $\hat{h}_1, \hat{h}_2, \hat{q}_1, \hat{q}_2, \hat{\delta}_1$ and $\hat{\delta}_2$ based on the following $(B1)-(B4)$.

\begin{align}
&(B1) \text{ Let } \hat{h}_1=\frac{\hat{\lambda}_1}{\hat{\lambda}_1+1}e^{\hat{\lambda}_1 +1}, \ \hat{h}_2=\frac{a\hat{\lambda}_2}{\hat{\lambda}_2+1}e^{\hat{\lambda}_2 +1}.\notag\\
    &(B2) \text{ Let } \hat{q}_1 > \max \left\{ \sqrt{\hat{h}_1(\frac{1}{\hat{\lambda}_1}+1)}, \ 4\left( c  \hat{h}_1 \hat{h}_2 (\frac{7}{2e\hat{\lambda}_2})^{\frac{7}{2}}  + \hat{h}_1^2(\frac{7}{2e\hat{\lambda}_1})^{\frac{7}{2}} \right) \right\}. \notag\\
    &(B3) \text{ Let } \hat{q}_2 > \max \left\{\sqrt{\hat{h}_2(\frac{1}{\hat{\lambda}_2}+1)}, \ \frac{4}{d}\left( b  \hat{h}_1 \hat{h}_2 (\frac{7}{2e\hat{\lambda}_1})^{\frac{7}{2}}  + \hat{h}_2^2(\frac{7}{2e\hat{\lambda}_2})^{\frac{7}{2}} \right) \right\}. \notag\\
    &(B4) \text{ Pick } \hat{\delta}_1 >0 \text{ so small such that  } 0<\hat{\delta}_1<\min\{1-ac,||g_1||_{\infty}\}, \text{ where }g_1(\xi) \text{ is defined by 
 }\notag\\
    & \text{(\ref{generalsupsubmin})}. \text{ Pick } \hat{\delta}_2>0 \text{ so small such that  } 0<\hat{\delta}_2<\min\{a-b,||g_2||_{\infty}\}, \text{ where }g_2(\xi) \text{ is defined by 
 }\notag\\
    & \text{(\ref{generalsupsubmin})}.\text{ Note that there exists } \hat{\xi}_i \in (\hat{\xi}^i_M,\hat{\xi}^i_0)\text{ such that }g_i(\hat{\xi}_i)=\hat{\delta}_i, i=1,2. \notag
\end{align}

In this case, we introduce the following functions $\overline{u}(\xi), \underline{u}(\xi),\overline{v}(\xi), \underline{v}(\xi)$ as follows.

\begin{center}
$\begin{array}{ll}
\overline{u}(\xi)         =    
\begin{cases}
1 \ & \text{ if } \xi \geq \frac{-1}{\hat{\lambda}_1}-1,\\
-\hat{h}_1\xi e^{\hat{\lambda}_1 \xi} \ & \text{ if } \xi \leq \frac{-1}{\hat{\lambda}_1}-1,
\end{cases} & \\
\underline{u}(\xi)       =      
\begin{cases}
\hat{\delta}_1 \ &  \text{ if } \xi \geq \hat{\xi_1},\\
[-\hat{h}_1\xi -\hat{q}_1 \sqrt{-\xi}]e^{\hat{\lambda}_1 \xi} \ &   \text{ if } \xi \leq \hat{\xi_1},
\end{cases} & \\
\overline{v}(\xi)         =    
\begin{cases}
a \ & \text{ if } \xi \geq \frac{-1}{\hat{\lambda}_2}-1,\\
-\hat{h}_2\xi e^{\hat{\lambda}_2 \xi} \ & \text{ if } \xi \leq \frac{-1}{\hat{\lambda}_2}-1,
\end{cases} & \\
\underline{v}(\xi)       =      
\begin{cases}
\hat{\delta}_2 \ &  \text{ if } \xi \geq \hat{\xi_2},\\
[-\hat{h}_2\xi -\hat{q}_2 \sqrt{-\xi}]e^{\hat{\lambda}_2 \xi} \ &   \text{ if } \xi \leq \hat{\xi_2},
\end{cases} & \\
\end{array}$
\end{center}
where $\hat{\xi_i}<-(\frac{\hat{q}_i}{\hat{h}_i})^2<\frac{-1}{\lambda_i}-1<0$ is a point such that $\underline{u}$ and $\underline{v}$ are continuous functions on $\mathbb{R}$ and $\hat{h}_1=\frac{\lambda_1}{\lambda_1+1}e^{\lambda_1 +1}, \ \hat{h}_2=\frac{a\lambda_2}{\lambda_2+1}e^{\lambda_2 +1}$. It is easy to see that $(\overline{u},\underline{u},\overline{v},\underline{v})$ meets the assumption $(1)$ and $(2)$ in Lemma \ref{existence}. 

Case 2. $ad<1$.

We select the constants $\hat{h}_1, \hat{q}_1, \hat{\delta}_1, \hat{\xi}_1$ in a similar way to the above and choose $\hat{Q}_2, \hat{\delta}_2 ,\hat{\mu}_2$ and $\hat{\xi}_2$ according to the following $(B5)-(B7)$. 

\begin{align}
    &(B5) \text{ Let } \hat{\mu}_2 \in (1,  \min\{\frac{\lambda_4}{\lambda_2},1+\frac{\hat{\lambda}_1}{2 \lambda_2},2\}), \text{ is very close to }1.\notag\\
    &(B6) \text{ Let } \ \hat{Q}_2 > \max\{1, \frac{a^2+\frac{2ab\hat{h}_1e^{-1}}{\hat{\lambda}_1}}{-d(\hat{\mu}_2\lambda_2)^2+s(\hat{\mu}_2 \lambda_2)-a}\}.\notag\\
    &(B7) \text{ Pick } \hat{\delta}_2>0 \text{ such that } 0<\hat{\delta}_2<\min\{a-b,||f_2||_{\infty}\},\notag\\
    &\text{ where } \lambda_4, \lambda_2 \text{ are defined by }(\ref{lambda})  \text{ and } f_2(\xi) \text{ is defined by 
 }f_2(\xi)=ae^{\lambda_2 \xi}-\hat{Q}_2e^{\hat{\mu}_2\lambda_2 \xi}. \notag\\
 &\text{ And choose }\hat{\xi}_2 \text{ such that } \underline{v}(\xi) \text{ be }C^0(\mathbb{R}).\notag
\end{align}

In this case, we consider the functions $\overline{u}(\xi), \underline{u}(\xi),\overline{v}(\xi), \underline{v}(\xi)$ as follows.

\begin{center}
$\begin{array}{ll}
\overline{u}(\xi)         =    
\begin{cases}
1 \ & \text{ if } \xi \geq \frac{-1}{\hat{\lambda}_1}-1,\\
-\hat{h}_1\xi e^{\hat{\lambda}_1 \xi} \ & \text{ if } \xi \leq \frac{-1}{\hat{\lambda}_1}-1,
\end{cases} & \\
\underline{u}(\xi)       =      
\begin{cases}
\hat{\delta}_1 \ &  \text{ if } \xi \geq \hat{\xi_1},\\
[-\hat{h}_1\xi -\hat{q}_1 \sqrt{-\xi}]e^{\hat{\lambda}_1 \xi} \ &   \text{ if } \xi \leq \hat{\xi_1},
\end{cases} & \\
\overline{v}(\xi)       =  
\begin{cases}
a \ & \text{ if } \xi \geq 0,\\
ae^{\lambda_2 \xi} \ & \text{ if } \xi \leq 0,
\end{cases} & \\
\underline{v}(\xi)       =      
\begin{cases}
\hat{\delta}_2 \ &  \text{ if } \xi \geq \hat{\xi}_2,\\
ae^{\lambda_2 \xi}-\hat{Q}_2e^{\hat{\mu}_2 \lambda_2 \xi} \ &   \text{ if } \xi \leq \hat{\xi}_2,
\end{cases} & \\
\end{array}$
\end{center}
where $\lambda_2$ is defined in $(\ref{lambda})$. It is easy to see that $(\overline{u},\underline{u},\overline{v},\underline{v})$ satisfying the assumption $(1)$ and $(2)$ in Lemma \ref{existence}.  Combining these two cases, we obtain the following theorem.
\begin{lemma}\label{existofsupsubmin}
    For $s = s^*$, there exists a positive solution $(u,v)(\xi)$ of $(\ref{maineq})$ with $\underline{u}(\xi) \leq u(\xi) \leq \overline{u}(\xi)$ and $\underline{v}(\xi) \leq v(\xi) \leq \overline{v}(\xi)$ for all $\xi \in \mathbb{R}$ such that
    $$\lim_{\xi \to -\infty}(u,v)=(0,0).$$
\end{lemma}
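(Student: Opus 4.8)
The plan is to reduce the statement to Lemma~\ref{existence}: in each of the two regimes $ad=1$ and $ad<1$ one only has to check that the explicitly defined quadruple $(\overline u,\underline u,\overline v,\underline v)$ is a genuine pair of super/sub-solutions of \eqref{maineq} meeting conditions~(1) and~(2) of that lemma. Once that is done, Lemma~\ref{existence} produces a positive solution $(u,v)$ with $\underline u\le u\le\overline u$ and $\underline v\le v\le\overline v$, and since near $-\infty$ each of the four barriers is a constant multiple of $-\xi e^{\lambda\xi}$, $(-\xi-q\sqrt{-\xi})e^{\lambda\xi}$, $ae^{\lambda_2\xi}$ or $ae^{\lambda_2\xi}-Qe^{\mu\lambda_2\xi}$ --- all vanishing as $\xi\to-\infty$ --- the squeeze theorem gives $\lim_{\xi\to-\infty}(u,v)=(0,0)$. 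Conditions~(1) and~(2) are quick bookkeeping: the constants $\hat h_i$ in $(B1)$ are chosen precisely so that $-\hat h_i\xi e^{\hat\lambda_i\xi}$ attains the value of the adjacent constant piece at the gluing abscissa $\xi=-1/\hat\lambda_i-1$ while still on its increasing branch there (its one-sided derivative is $\hat h_i\hat\lambda_i e^{\hat\lambda_i\xi}>0$, against the derivative $0$ of the constant piece --- the sign required of a super-solution corner); $\hat q_i\sqrt{-\xi}\ge0$ forces $\underline u\le\overline u$ and $\underline v\le\overline v$ on the polynomial-exponential pieces; the small $\hat\delta_i$ handle the constant overlap; and the corners of $\underline u,\underline v$ sit on the decreasing branch of the profile $g$ from \eqref{generalsupsubmin}, so $\underline u'(\xi-)\le0=\underline u'(\xi+)$. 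Thus the whole content of the lemma is the four differential inequalities of \eqref{supsub}, verified piecewise.

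For the super-solutions the computation is short and rests on one structural fact: at $s=s^*$ with $ad\le1$ we have $s=2$, so $\hat\lambda_1=s/2=1$ is the \emph{double} root of $x^2-sx+1=0$, and (when $ad=1$) $\hat\lambda_2=1/d$ is the double root of $dx^2-sx+a=0$; hence both $e^{\hat\lambda_1\xi}$ and $\xi e^{\hat\lambda_1\xi}$ lie in the kernel of $L_1 y:=y''-sy'+y$, and likewise for $L_2 y:=dy''-sy'+ay$. Consequently, on $\xi<-1/\hat\lambda_1-1$ one gets $L_1\overline u=0$ and the first line of \eqref{supsub} collapses to
$$\overline u''-s\overline u'+\overline u(1-\overline u-c\underline v)=\overline u(1-\overline u-c\underline v)-\overline u=-\overline u^{\,2}-c\,\overline u\,\underline v\le0;$$
the same works for $\overline v$ when $ad=1$ via $L_2$, and when $ad<1$ directly as in Lemma~\ref{existofsupsub}, where $\overline v=ae^{\lambda_2\xi}$ and $d\lambda_2^2-s\lambda_2+a=0$. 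On the constant pieces the first and third inequalities are immediate ($-c\underline v\le0$, resp.\ $-b\underline u\le0$, after the identical cancellation).

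The heart of the matter is the sub-solution inequalities on the polynomial-exponential pieces. On $\xi>\hat\xi_i$ the sub-solution is the constant $\hat\delta_i$, so the inequality reduces to $\hat\delta_1(1-\hat\delta_1-ca)\ge0$ (resp.\ $\hat\delta_2(a-\hat\delta_2-b)\ge0$), which is exactly why $(B4)$ (resp.\ $(B7)$) takes $\hat\delta_i$ small. On $\xi<\hat\xi_1$ the new ingredient is the action of $L_1$ on the corrector $\sqrt{-\xi}\,e^{\hat\lambda_1\xi}$: with $p(\xi)=(-\xi)^{1/2}$ and using $2\hat\lambda_1-s=0$, $\hat\lambda_1^2-s\hat\lambda_1+1=0$, one finds $L_1[p\,e^{\hat\lambda_1\xi}]=p''(\xi)e^{\hat\lambda_1\xi}=-\tfrac14(-\xi)^{-3/2}e^{\hat\lambda_1\xi}$, whence
$$\underline u''-s\underline u'+\underline u(1-\underline u-c\overline v)=\frac{\hat q_1}{4}(-\xi)^{-3/2}e^{\hat\lambda_1\xi}-\underline u^{\,2}-c\,\underline u\,\overline v .$$
Since on that half-line $0\le\underline u\le\overline u=\hat h_1(-\xi)e^{\hat\lambda_1\xi}$ and $0\le\overline v\le\hat h_2(-\xi)e^{\hat\lambda_2\xi}$, it suffices to ensure
$$\frac{\hat q_1}{4}\ \ge\ \hat h_1^2(-\xi)^{7/2}e^{\hat\lambda_1\xi}+c\,\hat h_1\hat h_2(-\xi)^{7/2}e^{\hat\lambda_2\xi},$$
and since $\max_{t>0}t^{7/2}e^{-\hat\lambda_i t}=\big(7/(2e\hat\lambda_i)\big)^{7/2}$, this is precisely the lower bound on $\hat q_1$ imposed in $(B2)$. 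The $(\overline v,\underline v)$ inequalities for $ad=1$ are identical up to the factor $d$ in front of $v''$ --- hence the $\tfrac4d$ and the appearance of $\hat\lambda_2$ in $(B3)$ --- while for $ad<1$ the $\underline v$ inequality is the one from Lemma~\ref{existofsupsub}, with the sole change that the coupling term $b\overline u$ now involves the profile $-\hat h_1\xi e^{\hat\lambda_1\xi}$, whose extra $(-\xi)$-factor is absorbed by the decay margin guaranteed in $(B5)$, producing the constant $2ab\hat h_1e^{-1}/\hat\lambda_1$ in $(B6)$.

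The step I expect to be the real obstacle is this last uniform estimate. At the critical speed the second-order operator annihilates $\xi e^{\hat\lambda\xi}$, so all of the positive margin a sub-solution needs is generated by the $\sqrt{-\xi}$-correction through $L_1[\sqrt{-\xi}e^{\hat\lambda_1\xi}]\sim(-\xi)^{-3/2}e^{\hat\lambda_1\xi}$, whereas the nonlinear and coupling losses are of order $(-\xi)^2e^{2\hat\lambda_1\xi}$ and $(-\xi)^2e^{(\hat\lambda_1+\hat\lambda_2)\xi}$; although the ratio does tend to $+\infty$ as $\xi\to-\infty$ (so far out the correction automatically wins), making it dominate \emph{uniformly on the entire half-line} $\xi\le\hat\xi_i$ is what pins down the exact threshold for $\hat q_i$, and it also forces $\hat\xi_i$ to be placed far enough to the left of $-1/\hat\lambda_j-1$ that the ``other'' barriers are already in their exponential (or polynomial-exponential) regime --- which is exactly what the remaining inequalities in $(B1)$--$(B7)$ are engineered to secure. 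The rest (the $ad<1$ branch, the symmetric case $ad>1$ which reduces to $ad\le1$, continuity and the corner signs at the finitely many gluing points) is then routine.
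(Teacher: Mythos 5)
Your proposal is correct and follows essentially the same route as the paper's proof: piecewise verification of the four inequalities in \eqref{supsub}, exploiting that at $s=s^*$ the rate $\hat\lambda_1$ (and $\hat\lambda_2$ when $ad=1$) is a double root so the linear operator annihilates both $e^{\hat\lambda\xi}$ and $\xi e^{\hat\lambda\xi}$, with the $\sqrt{-\xi}$-corrector supplying the margin $\tfrac{\hat q}{4}(-\xi)^{-3/2}e^{\hat\lambda\xi}$ that dominates the quadratic losses via $\max_{t>0}t^{7/2}e^{-\lambda t}=(7/(2e\lambda))^{7/2}$, exactly as encoded in $(B1)$--$(B7)$, followed by Lemma~\ref{existence} and the squeeze theorem. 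Your treatment of the gluing corners and of the ordering $\underline u\le\overline u$, $\underline v\le\overline v$ is in fact more explicit than the paper's, which dismisses these as routine.
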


\begin{proof}

    Consider the case $ad=1$. By Lemma \ref{existence}, it is sufficient to check that $(\overline{u},\underline{u},\overline{v},\underline{v})$ satisfy the definition of sup-sub solutions. Moreover, we only need to check the differential inequalities. It is easy to see that  $0< \overline{u}(\xi)<\underline{u}(\xi)\leq 1, \ 0< \overline{v}(\xi)<\underline{v}(\xi) \leq a$ for all $\xi \in \mathbb{R}$. Without loss of generality, we check the differential inequalities of $\overline{u}(\xi)$ and $\underline{u}(\xi)$. The other two differential inequalities can be proved in a similar way. First, we claim that
    $$\overline{u}''(\xi)-s\overline{u}'(\xi)+\overline{u}(\xi)(1-\overline{u}(\xi)-c\underline{v}(\xi))\leq 0$$
    holds for $\xi \in \mathbb{R} \setminus \{\frac{-1}{\lambda_1}-1\}.$ For $\xi>\frac{-1}{\lambda_1} -1$, $\overline{u}(\xi)=1$ and 
    $$\overline{u}''(\xi)-s\overline{u}'(\xi)+\overline{u}(\xi)(1-\overline{u}(\xi)-c\underline{v}(\xi))=-c\underline{v}(\xi) \leq 0.$$
    When $\xi < \frac{-1}{\lambda_1} -1$, $\Bar{u}(\xi)=-\hat{h}_1\xi e^{\hat{\lambda}_1 \xi}$ and 
        $$\overline{u}''(\xi)-s\overline{u}'(\xi)+\overline{u}(\xi)(1-\overline{u}(\xi)-c\underline{v}(\xi))=-c\Bar{u}(\xi)\underline{v}(\xi) \leq 0.$$
Next, we prove that 
    $$\underline{u}''(\xi)-s\underline{u}'(\xi)+\underline{u}(\xi)(1-\underline{u}(\xi)-c\overline{v}(\xi))\geq 0$$
    holds for $\xi \in \mathbb{R} \setminus \{\hat{\xi}_1\}.$ For $\xi > \hat{\xi}_1$, the inequality holds by a similar argument as in the case $s>s^*$. When $\xi < \hat{\xi}_1$, we have $\underline{u}(\xi)=[-\hat{h}_1\xi -\hat{q}_1 \sqrt{-\xi}]e^{\hat{\lambda}_1 \xi}$ and 
    \begin{align}
        &\underline{u}''(\xi)-s\underline{u}'(\xi)+\underline{u}(\xi)(1-\underline{u}(\xi)-c\overline{v}(\xi))\notag\\
=&-s(\frac{\hat{q}_1}{2}(-\xi)^{\frac{-1}{2}})e^{\hat{\lambda}_1 \xi}+\frac{\hat{q}_1}{4}(-\xi)^{\frac{-3}{2}}e^{\hat{\lambda}_1 \xi}+\hat{\lambda}_1\hat{q}_1(-\xi)^{\frac{-1}{2}}e^{\hat{\lambda}_1 \xi}\notag\\
&-c\Bar{v}(\xi)(-\hat{h}_1\xi-\hat{q}_1(-\xi)^{\frac{1}{2}})e^{\hat{\lambda}_1 \xi}-(-\hat{h}_1\xi-\hat{q}_1(-\xi)^{\frac{1}{2}})^2 e^{2 \hat{\lambda}_1 \xi}\notag  \\
=&e^{ \hat{\lambda}_1 \xi}\left[ -\frac{s \hat{q}_1}{2}(-\xi)^{\frac{-1}{2}}+\frac{\hat{q}_1}{4}(-\xi)^{\frac{-3}{2}}+\hat{\lambda}_1 \hat{q}_1(-\xi)^{\frac{-1}{2}}-c \Bar{v}(\xi) (-\hat{h}_1\xi-\hat{q}_1(-\xi)^{\frac{1}{2}})   \right]\notag\\
&-(-\hat{h}_1\xi-\hat{q}_1(-\xi)^{\frac{1}{2}})^2 e^{2 \hat{\lambda}_1 \xi}\notag\\
\geq& e^{ \hat{\lambda}_1 \xi}\left[ 
 \frac{\hat{q}_1}{4}(-\xi)^{\frac{-3}{2}}-c  \hat{h}_1 \hat{h}_2 |\xi|^2e^{\hat{\lambda}_2 \xi} - \hat{h}_1^2|\xi|^{2} e^{ \hat{\lambda}_1 \xi} \right] \notag\\
 \geq& (-\xi)^{\frac{-3}{2}}e^{ \hat{\lambda}_1 \xi}\left[ 
 \frac{\hat{q}_1}{4}-c  \hat{h}_1 \hat{h}_2 (\frac{7}{2e\hat{\lambda}_2})^{\frac{7}{2}}  - \hat{h}_1^2(\frac{7}{2e\hat{\lambda}_1})^{\frac{7}{2}} \right] \geq 0, \notag
\end{align}
    where we use the definition of $\hat{\lambda}_1,$ and the fact that given $\lambda>0$,
    $$(-\xi)^{\frac{7}{2}}e^{\lambda \xi} \leq (\frac{7}{2e\lambda})^{\frac{7}{2}} \text{ for }\xi \leq 0.$$

The proof of case $ad<1$ is very similar to the proof of case $s>s^*$, we only check
$$\underline{v}''(\xi)-s\underline{v}'(\xi)+\underline{v}(\xi)(1-\underline{v}(\xi)-c\overline{v}(\xi))\geq 0$$
here. For $\xi > \hat{\xi}_2$, the inequality holds by a similar argument as in the case $s>s^*$. When $\xi < \hat{\xi}_2$, we have

\begin{align}
    &d\underline{v}''(\xi)-s\underline{v}'(\xi)+\underline{v}(\xi)(a-b\overline{u}(\xi)-\underline{v}(\xi))\notag\\
    &=-\hat{Q}_2[d(\hat{\mu}_2 \lambda_2)^2-s(\hat{\mu}_2 \lambda_2)+a]e^{\hat{\mu}_2 \lambda_2 \xi}-b(-\hat{h}_1 \xi e^{\hat{\lambda}_1 \xi})(ae^{\lambda_2 \xi}-\hat{Q}_2e^{\hat{\mu}_2 \lambda_2 \xi})-(ae^{\lambda_2 \xi}-\hat{Q}_2e^{\hat{\mu}_2 \lambda_2 \xi})^2\notag\\
    &\geq -\hat{Q}_2[d(\hat{\mu}_2 \lambda_2)^2-s(\hat{\mu}_2 \lambda_2)+a]e^{\hat{\mu}_2 \lambda_2 \xi}-b(-\hat{h}_1 \xi e^{\hat{\lambda}_1 \xi})(ae^{\lambda_2 \xi})-a^2 e^{2\lambda_2 \xi} \notag\\
    & \geq e^{\hat{\mu}_2 \lambda_2 \xi}\left[-\hat{Q}_2[d(\hat{\mu}_2 \lambda_2)^2-s(\hat{\mu}_2 \lambda_2)+1]+ab\hat{h}_1(\xi e^{\frac{\hat{\lambda}_1}{2} \xi})e^{(\frac{\hat{\lambda}_1}{2}+(1-\hat{\mu}_2)\lambda_2) \xi}-a^2 e^{(2-\hat{\mu}_2)\lambda_2 \xi}\right]\notag\\
    &\geq e^{\hat{\mu}_2 \lambda_2 \xi}\left[-\hat{Q}_2[d(\hat{\mu}_2 \lambda_2)^2-s(\hat{\mu}_2 \lambda_2)+1]+ab\hat{h}_1(\frac{-2e^{-1}}{\hat{\lambda}_1})-a^2\right] \geq 0.\notag
\end{align}

Here we use the fact that 
$$\xi e^{\frac{\alpha}{2} \xi} \geq \frac{-2 e^{-1}}{\alpha} \text { for all }\xi \leq 0,$$
where $\alpha>0$. 

Finally, the limit
    \begin{equation}
        \lim_{\xi \to -\infty}(u,v)=(0,0),\notag
    \end{equation}
    can be proved by the Squeeze Theorem. Therefore, the proof of this theorem is complete.
\end{proof}
\subsection{The shrinking box argument} To analyze the asymptotic behavior of the positive solution obtained in Lemma \ref{existofsupsub} and Lemma \ref{existofsupsubmin} at $\xi \to +\infty$. We introduce the shrinking-box argument. This method can be referred to, for instance \cite{WGS2006}, \cite{gg2022}, \cite{YLHGLin}, and \cite{CTS}. We define the functions $m_u(\theta), m_v(\theta)$ and $M_u(\theta), M_v(\theta)$ for $\theta \in [0,1]$ as follows:
\begin{align}
    &m_u(\theta)=\theta u^*, \ M_u(\theta)=\theta u^*+(1-\theta)(1+\varepsilon),\\
    &m_v(\theta)=\theta v^*, \ M_v(\theta)=\theta v^*+(1-\theta)(a+\varepsilon),
\end{align}
where $\varepsilon$ is small enough such that $0<\varepsilon<\min\{\frac{1-ac}{c}, \frac{a-b}{b}\}$. For $0 < \theta_1 < \theta_2 <1$, it is easy to see that
\begin{align}
    &0=m_u(0)<m_u(\theta_1)<m_u(\theta_2)<m_u(1)=u^*=M_u(1)<M_u(\theta_2)<M_u(\theta_1)<M_u(0)=1+\varepsilon,\notag\\
    &0=m_v(0)<m_v(\theta_1)<m_v(\theta_2)<m_v(1)=v^*=M_v(1)<M_v(\theta_2)<M_v(\theta_1)<M_v(0)=a+\varepsilon.\notag
\end{align}
We are ready to show the tail behavior of the traveling wave solution at $+\infty$ as follows.
\begin{lemma}\label{shrinking}
    Let $(u,v)(\xi)$ be a positive solution obtained in Lemma \ref{existofsupsub} or Lemma \ref{existofsupsubmin}. Then 
\begin{align}\label{positiveinfty}
    \lim_{\xi \to +\infty}(u,v)(\xi)=(u^*,v^*).
\end{align}
\end{lemma}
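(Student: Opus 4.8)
The plan is to run the shrinking-box argument of \cite{WGS2006,gg2022,YLHGLin,CTS}. First I would upgrade the solution to uniform regularity: since $0\le u\le 1$ and $0\le v\le a$ on all of $\mathbb R$, the integral representation \eqref{solutionmap} — whose kernels decay exponentially in $|\xi-y|$ and whose integrand $F_i(u,v)$ is uniformly bounded — yields $u,v\in C^1(\mathbb R)$ with $\|u'\|_\infty,\|v'\|_\infty$ finite, and then \eqref{maineq} forces $u'',v''\in L^\infty(\mathbb R)$, so $u',v'$ are uniformly continuous. In particular $u_+:=\limsup_{\xi\to+\infty}u$, $u_-:=\liminf_{\xi\to+\infty}u$ and $v_\pm$ are finite, and since $\underline u\equiv\delta_1>0$, $\underline v\equiv\delta_2>0$ for $\xi$ large, we get $0<\delta_1\le u_-\le u_+\le 1$ and $0<\delta_2\le v_-\le v_+\le a$; in particular all four numbers are positive.

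Next I would produce near-extremum test points. Using the uniform bound on $u''$, there is a sequence $\xi_n\to+\infty$ with $u(\xi_n)\to u_+$, $u'(\xi_n)\to 0$ and $\limsup_n u''(\xi_n)\le 0$; passing to a subsequence, $v(\xi_n)\to v^\dagger\in[v_-,v_+]$. Plugging into the first line of \eqref{maineq} and dividing by $u_+>0$ gives $1-u_+-cv^\dagger\ge 0$, hence $u_+\le 1-cv_-$. Carrying out the three symmetric choices yields
\[
u_+\le 1-cv_-,\qquad u_-\ge 1-cv_+,\qquad v_+\le a-bu_-,\qquad v_-\ge a-bu_+.
\]
These four inequalities are exactly the assertion that $(u_\pm,v_\pm)$ stays inside the box $B(\theta):=[m_u(\theta),M_u(\theta)]\times[m_v(\theta),M_v(\theta)]$ for the largest admissible $\theta$, and the role of the choice $0<\varepsilon<\min\{\tfrac{1-ac}{c},\tfrac{a-b}{b}\}$ is precisely that it renders every $B(\theta)$ with $\theta<1$ strictly forward-attracting for the reaction terms, so that this largest $\theta$ must equal $1$ and $B(1)=\{(u^*,v^*)\}$.

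Then I would close the argument by linear algebra. Inserting $v_-\ge a-bu_+$ into $u_+\le 1-cv_-$ gives $(1-bc)u_+\le 1-ac$, and since $bc<ac<1$ under weak competition, $u_+\le u^*$; symmetrically, $u_-\ge u^*$. Hence $u_-=u_+=u^*$, whence $v_-=v_+=a-bu^*=v^*$, and therefore $\lim_{\xi\to+\infty}(u,v)(\xi)=(u^*,v^*)$, as claimed.

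I expect the main obstacle to be the second step: at a near-maximum of $u$ there is no pointwise control on $v$, so each inequality must be extracted through subsequences and the \emph{global} $\limsup/\liminf$ of the partner component, and the derivative estimates used to manufacture the test sequences must be genuinely uniform in $\xi$ — which is what the exponentially decaying kernels in \eqref{solutionmap} provide. Everything after that is routine.
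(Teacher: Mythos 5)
Your proposal is correct, and it reaches the same destination as the paper's shrinking-box proof by a noticeably more direct route. The paper introduces the parameter $\theta_0=\sup\{\theta:(u^\pm,v^\pm)\text{ lies strictly inside }B(\theta)\}$, assumes $\theta_0<1$, and rules out each boundary-touching case separately, splitting every case into ``$u$ eventually monotone'' (handled by integrating the equation from $0$ to $\xi_n$ and forcing the right-hand side to $-\infty$) and ``$u$ oscillatory'' (handled at local minima). You instead invoke the fluctuation lemma of Hirsch--Hanisch--Gabriel type once at each of the four extremal values $u_\pm,v_\pm$ --- which treats the monotone and oscillatory cases uniformly --- to extract the closed system $u_+\le 1-cv_-$, $u_-\ge 1-cv_+$, $v_+\le a-bu_-$, $v_-\ge a-bu_+$, and then solve it exactly using $1-bc>0$; this replaces the paper's iterative $\theta_0$-argument with one step of linear algebra and makes the role of the weak-competition condition transparent. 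The two ingredients you must supply that the paper gets for free are (i) the positivity $u_-\ge\delta_1>0$, $v_-\ge\delta_2>0$ from the sub-solutions, which you use correctly to divide by $u(\xi_n)$, and (ii) the uniform bound on $u''$, $v''$ justifying the fluctuation lemma, which indeed follows from the integral representation \eqref{solutionmap} and the equation. Both are in place, so the argument is complete; it is a legitimate and arguably cleaner alternative to the proof in the text.
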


\begin{proof}
By the fact that $\delta_1 \leq \underline{u}(\xi) \leq u(\xi) \leq \overline{u}(\xi) \leq 1$ and $\delta_2 \leq \underline{v}(\xi) \leq v(\xi) \leq \overline{v}(\xi) \leq a$ for all $\xi >0$, we obtain that
\begin{align}
    &\limsup_{\xi \to +\infty} u(\xi) \leq 1, \  \limsup_{\xi \to +\infty} v(\xi) \leq a\notag\\
    &\liminf_{\xi \to +\infty} u(\xi) \geq \delta_1, \  \liminf_{\xi \to +\infty} v(\xi) \geq \delta_2.\notag
\end{align}
Now we denote
\begin{align}
    &u^-=\displaystyle\liminf_{\xi \to +\infty}u(\xi), \ u^+=\displaystyle\limsup_{\xi \to +\infty}u(\xi)\notag\\
    &v^-=\displaystyle\liminf_{\xi \to +\infty}v(\xi), \ v^+=\displaystyle\limsup_{\xi \to +\infty}v(\xi).\notag
\end{align}
It is easy to see that 
\begin{align}
    & m_u(0)=0<u^- \leq u^+ <1+\varepsilon=M_u(0)\notag\\ 
    & m_v(0)=0<v^- \leq v^+ <a+\varepsilon=M_v(0).\notag
\end{align}
Note that (\ref{positiveinfty}) holds if we can show that 
\begin{equation}\label{shrink1}
    m_u(\theta)<u^- \leq u^+ <M_u(\theta) \ \text{ and } \ m_v(\theta)<v^- \leq v^+ <M_v(\theta)
\end{equation}
for all $\theta \in [0,1)$. Set $\theta_0:=\sup \{\theta \in [0,1): \text{ (\ref{shrink1}) holds}\}.$ Then $\theta_0$ is well-defined and it suffices to claim that $\theta_0=1$. Suppose $\theta_0<1$. Then passing to the limit, we have
\begin{align}
    & m_u(\theta_0) \leq u^- \leq u^+ \leq M_u(\theta_0),\notag\\ 
    & m_v(\theta_0)\leq v^- \leq v^+ \leq M_v(\theta_0).\notag
\end{align}
Moreover, by the definition of $\theta_0$, at least one of the following conditions holds:
\begin{align}
    u^-=m_u(\theta_0),  u^+ = M_u(\theta_0), v^-=m_v(\theta_0), v^+ = M_v(\theta_0).\notag
\end{align}
We only prove the first case; the proofs for the others are similar. Assume $u^-=m_u(\theta_0)$. We known that $v^+\leq M_v(\theta_0)$. If $u(\xi)$ is eventually monotone, then $u(+\infty)$ exist by $u(\xi)$ is bounded on $\mathbb{R}$. By the property of $\liminf$, we have
\begin{align}\label{shrinking 1}
    \liminf_{\xi \to +\infty} [1-u(\xi)-cv(\xi)]& \geq [1-\theta_0 u^*-\theta_0cv^*-(1-\theta_0)c(a+\varepsilon)]\notag\\
    &=(1-\theta_0)(1-ca-c\varepsilon)>0.
\end{align}
On the other hand, we have $\int_0^\infty u'(s)ds=u(+\infty)-u(0)$ is finite. Then we choose $\{\xi_n\}, \ \xi_n \to +\infty$ such that $\displaystyle\lim_{n \to +\infty}u(\xi_n)=m_u(\theta_0)$ and $\displaystyle\lim_{n \to +\infty}u'(\xi_n)=0$. Integrating the first equation of the system (\ref{maineq}) from $0$ to $\xi_n$, we obtain that
\begin{equation}\label{int0toxi_nu-}
    u'(\xi_n)-u'(0)-s[u(\xi_n)-u(0)]=-\int_0^{\xi_n} u(s)[1-u(s)-cv(s)]ds.
\end{equation}
Letting $n \to +\infty$, we get a contradiction, since the left-hand side of (\ref{int0toxi_nu-}) remains bounded and the right-hand side of (\ref{int0toxi_nu-}) tends to $-\infty$.

If $u(\xi)$ is oscillatory at $+\infty$, let $\xi_n \to +\infty$ be all the local minimum points of $u(\xi)$. It is easy to prove that $\displaystyle\liminf_{\xi \to \infty} u(\xi)=\displaystyle\liminf_{n \to \infty}u(\xi_n)=m_u(\theta_0)$. By taking the subsequence, we can choose $\{\xi_{n_k}\}$ such that $\displaystyle\lim_{k \to +\infty}u(\xi_{n_k})=m_u(\theta_0)$. Note that $u''(\xi_{n_k})-su'(\xi_{n_k}) \geq 0$ for all $k$. By $(\ref{shrinking 1})$, we obtain that
\begin{align}
    \liminf_{k \to +\infty} [u''(\xi_{n_k})-su'(\xi_{n_k})+u(\xi_{n_k})(1-u(\xi_{n_k})-cv(\xi_{n_k}))] >0,\notag
\end{align}
a contradiction. Hence $u^-=m_u(\theta_0)$ is impossible. Using similar argument as above we can still arrive other cases are impossible. Consequently, we must have $\theta_0=1$ and (\ref{positiveinfty}) follows.
\end{proof}

The proof of Proposition $\ref{maintheorem}$ is complete.

\subsection{Determination of the minimum speed} In this subsection, we would like to show that there is no positive solution of $(\ref{maineq})$ that satisfies $(\ref{asymptotic behavior})$ for $s<s^*.$ We have the following theorem.


\begin{lemma}\label{nonexist}
    Under the strict or critical competition cases, for $s<s^*$ there exists no positive solution to $(\ref{maineq})$ with the boundary condition 
    \[
        \lim_{\xi \to -\infty}(u,v)(\xi) = (0,0).
    \]
\end{lemma}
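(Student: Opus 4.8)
The plan is to argue by contradiction, using only the behaviour of the solution near $\xi=-\infty$, where $(\ref{maineq})$ linearises around $(0,0)$; the argument is the classical one showing that a reaction--diffusion front cannot propagate below the linear spreading speed. Suppose $(u,v)$ is a positive solution of $(\ref{maineq})$ with $(u,v)(\xi)\to(0,0)$ as $\xi\to-\infty$. If $s\ge 0$, then since $s<s^*=\max\{2,2\sqrt{ad}\}$ at least one of $s<2$, $s<2\sqrt{ad}$ holds, and by the symmetry between the two equations it suffices to treat $s<2$ using the first equation (the case $s<2\sqrt{ad}$ is identical, with the second equation and $d,a$ playing the roles of $1,1$). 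The range $s<0$ is reduced, via the reflection $\xi\mapsto-\xi$, to an even simpler statement near $+\infty$, treated at the end.

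So assume $0\le s<2$ and fix $\varepsilon\in(0,1)$ so small that $s^2<4(1-\varepsilon)$. Since $u(\xi)+c\,v(\xi)\to 0$, there is $\xi_0$ with $u+c\,v<\varepsilon$ on $(-\infty,\xi_0)$, whence, using $u>0$,
\[
u''-s u'+(1-\varepsilon)\,u\;<\;u''-s u'+u\,(1-u-cv)\;=\;0\qquad\text{on }(-\infty,\xi_0),
\]
so $u$ is a strict subsolution there of $L_\varepsilon w:=w''-sw'+(1-\varepsilon)w$. Put $\omega:=\sqrt{(1-\varepsilon)-s^2/4}>0$ and $\phi(\xi):=e^{s\xi/2}\sin\!\big(\omega(\xi-\tau)\big)$, so $L_\varepsilon\phi=0$; by translation invariance of $L_\varepsilon$ choose the phase $\tau$ so that a half-period $[\alpha,\beta]$, on which $\phi(\alpha)=\phi(\beta)=0$ and $\phi>0$ in $(\alpha,\beta)$, lies inside $(-\infty,\xi_0)$. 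Set $\lambda^*:=\sup\{\lambda>0:\ u\ge\lambda\phi\text{ on }[\alpha,\beta]\}$; this is finite (since $\phi>0$ on $(\alpha,\beta)$ while $u$ is bounded on the compact $[\alpha,\beta]$) and positive (since $\min_{[\alpha,\beta]}u>0$). Then $u\ge\lambda^*\phi$ on $[\alpha,\beta]$ with equality at a point, which is interior because $\phi=0<u$ at $\alpha,\beta$; hence $z:=u-\lambda^*\phi\ge 0$ on $[\alpha,\beta]$ has an interior minimum $z(\xi^*)=0$, so $z'(\xi^*)=0$. But $z''-sz'=L_\varepsilon u-(1-\varepsilon)z<0$ on $[\alpha,\beta]$ (using $L_\varepsilon\phi=0$, $L_\varepsilon u<0$, $z\ge 0$), so $e^{-s\xi}z'(\xi)$ is strictly decreasing; as it vanishes at $\xi^*$ it is negative for $\xi$ slightly larger, hence there $z'<0$ and $z<z(\xi^*)=0$, contradicting $z\ge 0$.

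For $s<0$, let $\tilde u(\xi):=u(-\xi)$, $\tilde v(\xi):=v(-\xi)$, $\tilde s:=-s>0$; then $(\tilde u,\tilde v)$ solves $(\ref{maineq})$ with speed $\tilde s$ and $(\tilde u,\tilde v)(\xi)\to(0,0)$ as $\xi\to+\infty$. Near $+\infty$, $\tilde u\,(1-\tilde u-c\tilde v)>0$, hence $\tilde u''-\tilde s\tilde u'<0$ there, i.e. $e^{-\tilde s\xi}\tilde u'$ is strictly decreasing and so has a limit in $[-\infty,+\infty)$ at $+\infty$. If that limit is $\ge 0$, then $\tilde u'>0$ eventually, so $\tilde u$ is eventually strictly increasing, forcing $\tilde u<0$ in view of $\tilde u(+\infty)=0$ — impossible. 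If the limit is $<0$, then $\tilde u'(\xi)\le-\delta e^{\tilde s\xi}$ for large $\xi$ and some $\delta>0$, so $\tilde u(\xi)\to-\infty$ — again impossible. This disposes of all $s<s^*$.

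The structure is entirely standard, and I do not anticipate a genuine obstacle; the only steps that call for a little care are: (i) checking that the positivity interval $[\alpha,\beta]$ of the oscillatory test function can be pushed into the region $(-\infty,\xi_0)$ where the linear differential inequality is valid, which is immediate from the translation (and scaling) invariance of the constant--coefficient equation; and (ii) dealing with the wrong-signed zeroth-order term $(1-\varepsilon)z$, which is why it is moved to the right-hand side of $z''-sz'=L_\varepsilon u-(1-\varepsilon)z$ before the sign of the left side is exploited — a one-line surrogate for the strong maximum principle. Note that neither boundedness of $(u,v)$ nor any decay rate at $-\infty$ is needed: only positivity and $(u,v)(-\infty)=(0,0)$ enter.
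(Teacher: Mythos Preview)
Your argument is correct and follows essentially the same Sturm--comparison route as the paper: for $0\le s<s^*$ you compare $u$ (or $v$) with an oscillatory solution of the linearised operator, the paper carrying this out via the Lagrange identity after the substitution $w=e^{s\xi/2}u$ (Appendix~\ref{Appendix A}) while you use an equivalent sliding/touching formulation directly on $u$. For negative speeds the paper integrates the $u$-equation from $-\infty$ to reach a sign contradiction, whereas you reflect to $+\infty$ and read off the monotonicity of $e^{-\tilde s\xi}\tilde u'$; both are short and elementary.
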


\begin{proof}
    Without loss of generality, we assume $s^*=2$. We first claim that $s \leq 0$ does not have a positive solution. Assume for some $\Bar{s} \leq 0$ there exists a positive solution of $(\ref{maineq})$ and $(\ref{asymptotic behavior})$. Then it follows from $(\ref{asymptotic behavior})$ that $\Psi(\xi):=1-u(\xi)-cv(\xi) \to 1$ as $\xi \to -\infty$. Hence there is a large $K>0$ such that
    $$\Psi(\xi) \geq \frac{1}{2}, \ \text{ for all }\xi \leq -K.$$
    If $u(\xi)$ oscillates near $-\infty$, then $u'(-\infty)=0$. An integration of the $u$-equation in $(\ref{maineq})$ from $-\infty$ to $\xi \leq -K$ gives
    \begin{align}
        0&=u'(\xi)-\Bar{s} u(\xi)+\displaystyle\int_{-\infty}^\xi u(\eta)(1-u(\eta)-cv(\eta))d\eta\geq u'(\xi)+\frac{1}{2}\displaystyle\int_{-\infty}^\xi u(\eta)d\eta.\notag
    \end{align}
    This implies, by an integration from $-\infty$ to $-K$ and using $u(-\infty) = 0$, that
    \begin{align}
        u(-K)=\displaystyle\int_{-\infty}^{-K}u'(\eta)d\eta \leq \frac{-1}{2}\int_{-\infty}^{-K}\int_{-\infty}^{\xi}u(\eta)d\eta d\xi < 0,\notag
    \end{align}
    a contradiction to the positivity of $u$ in $\mathbb{R}$. If $u(\xi)$ oscillates near $-\infty$, pick $\xi_n \to -\infty$ be the local minimum points of $u(\xi)$ with $u'(\xi_n)=0$ for all $n$. Apply the similar argument as above, and we also have $u(-K)<0$, a contradiction. Hence, we must have $s > 0$.

    Next, we can use the standard Sturm-Liouville argument to claim that for any $s \in (0,2)$ the equation $(\ref{maineq})$ and $(\ref{asymptotic behavior})$ has no positive solution. We left the proof in the Appendix \ref{Appendix A}. Therefore, the proof of this theorem is complete.    
\end{proof}
For the monotone solution, we quote the interesting results obtained by Tang and Fife \cite{MP1980} and Ma \cite{S.Ma}. For the non-monotone solution, we describe our construction with details.

\section{The profile of the solution} \label{Section 3}

\subsection{The monotone solution} Before constructing non-monotone solutions, we first recall some known results on existence and monotonicity of traveling wave solutions. For the existence of the monotonic solution, we have the following proposition which is due to Ma \cite[Theorem 2.1]{S.Ma}.
\begin{proposition}
    If $(\ref{maineq})$ has a non-constant super-solution $(\Bar{u}(\xi),\Bar{v}(\xi))$ and sub-solution $(\underline{u}(\xi),\underline{v}(\xi))$ satisfy
\begin{enumerate}
    \item $0 \leq \underline{u}(\xi) \leq \Bar{u}(\xi) \leq u^*$ and $0 \leq \underline{v}(\xi) \leq \Bar{v}(\xi) \leq v^*$ for all $\xi \in \mathbb{R}$;

    \item $\displaystyle\sup_{t \leq \xi}\underline{u}(t) \leq \Bar{u}(\xi)$, \ $\displaystyle\sup_{t \leq \xi}\underline{v}(t) \leq \Bar{v}(\xi)$ for all $\xi \in \mathbb{R}$;

    \item There is no constant equilibrium in the product set
    $$[(0,\displaystyle\inf_{\xi \in \mathbb{R}}\Bar{u}(\xi)] \cup [\sup_{\xi \in \mathbb{R}}\underline{u}(\xi),u^*)]\times[(0,\displaystyle\inf_{\xi \in \mathbb{R}}\Bar{v}(\xi)] \cup [\sup_{\xi \in \mathbb{R}}\underline{v}(\xi),v^*)].$$
    Then $(\ref{maineq})$ and $(\ref{asymptotic behavior})$ have a monotone solution. That is, $(\ref{maineq})$ has a traveling wavefront solution. 
\end{enumerate}
\end{proposition}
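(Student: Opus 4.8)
The plan is to realize the front as a fixed point of the integral operator $P=(P_1,P_2)$ from \eqref{solutionmap}, squeezed between the given sub- and super-solution, and then to read off monotonicity from hypothesis~(2). There are three ingredients: (i)~an alternating monotone iteration producing a bounded entire solution $(u,v)$ of \eqref{maineq} with $\underline u\le u\le\overline u$, $\underline v\le v\le\overline v$; (ii)~the shrinking-box argument of Lemma~\ref{shrinking}, together with hypothesis~(3), to identify the limit at $+\infty$ as $(u^*,v^*)$; and (iii)~an ordering argument, powered by hypothesis~(2), forcing $u$ and $v$ to be non-decreasing.

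For (i), recall that $\beta>0$ in \eqref{system} was fixed so that $\partial_u F_1\ge0$ and $\partial_v F_2\ge0$ on $X$, while the coupling is competitive ($\partial_v F_1\le0$, $\partial_u F_2\le0$). Hence the natural scheme is the alternating one: put $(\overline u_0,\underline u_0,\overline v_0,\underline v_0)=(\overline u,\underline u,\overline v,\underline v)$ and define
\[
\begin{aligned}
&\overline u_{n+1}=P_1(\overline u_n,\underline v_n),\qquad \underline u_{n+1}=P_1(\underline u_n,\overline v_n),\\
&\overline v_{n+1}=P_2(\underline u_n,\overline v_n),\qquad \underline v_{n+1}=P_2(\overline u_n,\underline v_n).
\end{aligned}
\]
Using the defining inequalities \eqref{supsub} and the comparison principle for $L_iw:=d_iw''-sw'-\beta w$, one checks inductively that $\underline u_n\le\underline u_{n+1}\le\overline u_{n+1}\le\overline u_n$ (and likewise for $v$), that each $\overline u_n$ stays a super-solution and each $\underline u_n$ a sub-solution, and that hypothesis~(1) is preserved. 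By monotonicity and boundedness the four sequences converge pointwise; elliptic estimates promote this to $C^2_{\mathrm{loc}}$ convergence, and passing to the limit in the integral equation shows that $(u,v):=(\lim_n\overline u_n,\lim_n\underline v_n)$ solves \eqref{maineq} and lies in the order interval. This part is exactly the ``standard argument'' referenced before Lemma~\ref{existence}, so it can be quoted from \cite[Theorem~2.1]{S.Ma}.

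For (ii), hypothesis~(1) gives $0\le u\le u^*$ and $0\le v\le v^*$ on $\mathbb{R}$; a strong-maximum-principle argument as in Lemma~\ref{existofsupsub} shows $u,v>0$ everywhere, since otherwise ODE uniqueness would force $u\equiv0$, contradicting nontriviality. The limit $(0,0)$ at $-\infty$ is built into the sub/super bounds (or follows from a mirror image of the shrinking-box argument), so $(u,v)$ fits the framework of Lemma~\ref{shrinking}: one shows $m_u(\theta)<u^-\le u^+<M_u(\theta)$ and the analogous bounds for $v$ for all $\theta\in[0,1)$, contracting the nested boxes as long as no constant equilibrium lies in the relevant product set. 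Hypothesis~(3) is precisely what rules out such an equilibrium, so the contraction runs up to $\theta=1$ and $(u,v)(\xi)\to(u^*,v^*)$ as $\xi\to+\infty$; thus $(u,v)$ satisfies \eqref{asymptotic behavior}.

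For (iii), monotonicity is where hypothesis~(2) enters, and this is the step I expect to be the main obstacle. Since the system is genuinely competitive, the one-sided iteration is \emph{not} order-preserving and one cannot simply work inside the cone of non-decreasing functions; the inequality $\sup_{t\le\xi}\underline u(t)\le\overline u(\xi)$ is, however, exactly the structural condition that rescues the argument. One route: re-run the iteration of (i) with each lower iterate replaced by its non-decreasing envelope $\xi\mapsto\sup_{t\le\xi}\underline u_n(t)$, which by~(2) still lies below $(\overline u,\overline v)$, so that the limiting $u$ and $v$ are non-decreasing. An equivalent route: compare $(u,v)$ with its right-translates $(u(\cdot+\tau),v(\cdot+\tau))$, $\tau>0$, via the comparison principle for $L_i$, the required ordering at the starting configuration again being provided by~(2); the delicate point here is controlling the behaviour at the two ends during the slide. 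Combined with $(u,v)(-\infty)=(0,0)$ and $(u,v)(+\infty)=(u^*,v^*)$, this exhibits $(u,v)$ as a monotone traveling front, i.e.\ a wavefront solution of \eqref{maineq}. Ingredients (i) and (ii) being comparatively routine given Lemmas~\ref{existence} and~\ref{shrinking}, this is presumably why the full details are deferred to Ma~\cite[Theorem~2.1]{S.Ma}.
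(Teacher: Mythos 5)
First, a point of reference: the paper does not prove this proposition at all --- it is quoted from Ma with an explicit pointer to \cite[Theorem~2.1]{S.Ma} and the sentence ``its proof is given by Ma'' immediately preceding the statement. So your proposal cannot be compared against an in-paper argument; it has to stand on its own, and as written it does not close the one step that matters.

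Your ingredient (i) is essentially Lemma~\ref{existence} and is fine, but (ii) and (iii) both have problems. For (ii), the shrinking-box argument of Lemma~\ref{shrinking} initializes from $\liminf_{\xi\to+\infty}u\ge\delta_1>0$, which in that lemma comes from the \emph{specific} sub-solutions being eventually equal to the constants $\delta_i$; for the general sub-solutions of this proposition $\liminf_{\xi\to+\infty}\underline u$ may well be $0$, so the box argument does not start. The natural order is the reverse: establish monotonicity first, after which $u(\pm\infty)$, $v(\pm\infty)$ exist automatically and hypothesis~(3) identifies them. For (iii) --- which you correctly flag as the main obstacle --- your route (a) fails: with $\beta$ large, $F_1(u,v)=\beta u+u(1-u-cv)$ is increasing in $u$ but \emph{decreasing} in $v$, so $P_1$ applied to a pair of non-decreasing profiles need not return a non-decreasing profile, and taking monotone envelopes of the lower iterates does not repair this. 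This is not a technicality: the coexistence wave problem is genuinely non-monotone as a dynamical system (the equilibria $(0,0)$ and $(u^*,v^*)$ are not ordered in the competitive cone), which is precisely why the present paper can produce non-monotone waves once condition~(1) is violated. The decisive role of hypothesis~(1) is therefore not merely to supply a priori bounds but to confine the fixed point to the closed box $[0,u^*]\times[0,v^*]$, at which point Theorem~\ref{P 1} of this very paper (the integrating-factor argument showing $(u'e^{-s^2\xi})'\le0$ when $1-u-cv\ge1-u^*-cv^*=0$) forces each component to be monotone. That is the missing idea; with it, your steps reassemble correctly as: Lemma~\ref{existence} $\Rightarrow$ solution in the box, Theorem~\ref{P 1} $\Rightarrow$ monotonicity, hypotheses~(2)--(3) $\Rightarrow$ the limits at $\mp\infty$ are $(0,0)$ and $(u^*,v^*)$.
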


Here, we make a short common. If we know that any traveling wave solution satisfies $0<u(\xi)<u^*$ and $0<v(\xi)<v^*$ for all $\xi \in \mathbb{R}$, then Lemma 4.1 in \cite{wu2013spreading} shows that $(u,v)(\xi)$ is strictly monotone traveling wave function.

\begin{proposition}  \cite[Lemma 4.1]{wu2013spreading}
    \label{P 1}

For any $s \in \mathbb{R}$, if the wave profile $(u,v)$ satisfies \eqref{maineq} with $0 < u(x) < u^*$ and $0 < v(x) < v^*$ for any $x \in \mathbb{R}$, then $(u,v)$ is strictly monotone.
\end{proposition}

\color{black}
\subsection{The non-monotone solution}  We are interested to see whether there exists a non-monotone solution. In fact, the increase or decrease of the solution is very important. Knowing whether the solution is increasing or decreasing can lead to a better understanding of certain phenomena. If the solution $(u,v)(\xi)$ is strictly monotone, then the linearized operator of $(u,v)(\xi)$ has a positive kernel. If the solution is non-monotone, the corresponding linearized operator has a sign-changing kernel. Therefore, this type of solution is particularly interesting and more challenging to analyze. We provide several references on non-monotone solutions, namely \cite{marion2023existence}, and \cite{hung2012exact}. According to Proposition \ref{P 1}, we must find solutions whose range is not contained in $(0,u^*)$ or $(0,v^*)$. We give a special example to show that there exists a non-monotone traveling wave solution of $(\ref{maineq})$ and satisfies $(\ref{asymptotic behavior})$. We note that the existence theorem in Proposition \ref{maintheorem} has yet to express the monotonicity of the solution. Let us prove Theorems \ref{unonmonotone} and \ref{vnonmonotone}.


\begin{proof}[Proof of Theorem \ref{unonmonotone}]

\

Case 1. For any $s>s^*$ be fixed, and $\lambda_1$ is defined. We define a $c$-independence value $q_1$ as follows
    $$q_1=\frac{2}{-(\mu_1 
\lambda_1)^2+s(\mu_1 \lambda_1)-1} > \frac{1+ac}{-(\mu_1\lambda_1)^2+s(\mu_1 \lambda_1)-1}$$
where $\mu_1$ is to be determined later. This inequality holds because of the weak competition condition $b<a<\frac{1}{c}$. Note that this $q_1$ may not satisfies (A2). So, we pick $\mu_1$ close to $1$ such that   
$$q_1=\frac{2}{-(\mu_1 
\lambda_1)^2+s(\mu_1 \lambda_1)-1} >1$$
that is we have
$$q_1 > \max\{1,\frac{1+ac}{-(\mu_1\lambda_1)^2+s(\mu_1 \lambda_1)-1}\}$$
which is independent on $c$. Finally, we choose a suitable $\delta_1$ that satisfy (A3).

Then by $(\ref{maxoff})$ we obtain that the global maximum of $\underline{u}(\xi)$ is
$$||\underline{u}||_\infty=(1-\frac{1}{\mu_1})(q_1 \mu_1)^{\frac{-1}{\mu_1-1}}$$
    which is a constant depending on $s$  and is independent on $c$. Therefore, there exists a $\delta(a,b,s)>0$ such that if $\delta(a,b,s)<c<\frac{1}{a}$ then
$$u^*:=\frac{1-ac}{1-bc}<(1-\frac{1}{\mu_1})(q_1 \mu_1)^{\frac{-1}{\mu_1-1}}=||\underline{u}||_\infty.$$
By the asymptotic behavior of $(u,v)$, Lemma \ref{shrinking}, we can see that the profile of $u(\xi)$ is a non-monotone function.

\

Case 2. Let $s=s^*$ and $\hat{\lambda}_1$ is defined. We note that, in both cases ($ad<1$ and $ad=1$), the lower solution $\underline{u}(\xi)$ is defined in the same form. 
$$\underline{u}(\xi)=[-h\xi-q\sqrt{-\xi}]e^{\lambda \xi}.$$
Since $c<\frac{1}{a}$ we can always choose
$$\hat{q}_1=\max \left\{ \sqrt{\hat{h}_1(\frac{1}{\hat{\lambda}_1}+1)}, \ 4\left( \frac{1}{a}  \hat{h}_1 \hat{h}_2 (\frac{7}{2e\hat{\lambda}_2})^{\frac{7}{2}}  + \hat{h}_1^2(\frac{7}{2e\hat{\lambda}_1})^{\frac{7}{2}} \right) \right\}$$
which satisfies (B2) and is independent on $c$.

Moreover, the maximum of $g(\xi)$, 
\begin{equation}
    g(\xi)=(-h \xi-q \sqrt{-\xi})e^{\lambda \xi}, \ \ \xi \leq -(\frac{q}{h})^2\notag
\end{equation}
defined in $(\ref{generalsupsubmin})$, depends only on $\lambda, h$ and $q$. In other words, $||\underline{u}||_\infty$ is also independent of $c$. Applying the same technique, we also have the same conclusion.
\end{proof}
The proof of Theorem \ref{vnonmonotone} follows by a similar argument. The key point is that under the weak competition condition $b<a$, we can choose $q_2$, $\hat{q}_2$ and $\hat{Q}_2$
 sufficiently large so that they satisfy (A2), (B3), and (B6), respectively; in addition, $q_2$, $\hat{q}_2$ and $\hat{Q}_2$ can be chosen so that they are independent of $b$. Therefore, $||\underline{v}||_\infty$ is independent of $b$. We omit the proof here.

The following is an example demonstrating a traveling wave solution that is not monotonically increasing.

\begin{example}
    Set $d=1, a=1,  \ c=\frac{1}{2},$ and given any $s>2$. Then set $b_n=1-\frac{1}{n}$ for $n > 2 (k+1)(q_2(k)\frac{k+1}{k})^k-1$ where
    $$k>\max\{1,(\min\{ \frac{\lambda_4}{\lambda_2}, \frac{\lambda_1+\lambda_2}{\lambda_2},2\}-1)^{-1}\}$$
    and
    $$q_2(k)=\frac{2}{-((1+\frac{1}{k})\lambda_2)^2+s((1+\frac{1}{k})\lambda_2)-1}.$$
    If $(u,v)$ is a solution in Lemma $\ref{existofsupsub}$ with such $s$, then the profile of $v(\xi)$ is a non-monotone wave.
\end{example}

\begin{proof}

 Setting $a=1, \ b_n=1-\frac{1}{n}, \ c=\frac{1}{2}, \ d=1$ so that the parameter satisfies the weak competition condition $(\ref{weak})$ for some very large $n \in \mathbb{R}^+$. The equilibrium $(u^*,v^*)$ is equal to
$$(u^*,v^*)=(\frac{n}{n+1},\frac{2}{n+1}).$$

For $s>s^*=2$, the values $\lambda_1, \lambda_2$ are defined as
$$\lambda_1=\lambda_2=\frac{s-\sqrt{s^2-4}}{2}>0.$$

\

Set $\mu_2=1+\frac{1}{k} < \min\{ \frac{\lambda_4}{\lambda_2}, \frac{\lambda_1+\lambda_2}{\lambda_2},2\}$ for some $k>1$ and $q_2=q_2(k) = \frac{2}{-((1+\frac{1}{k})\lambda_2)^2+s((1+\frac{1}{k})\lambda_2)-1}$. Then we obtain that
$$||\underline{v}||_\infty=(\frac{1}{k+1})(q_2(k)\frac{k+1}{k})^{-k}.$$
Therefore, if 
$$v^*=\frac{2}{n+1}<(\frac{1}{k+1})(q_2(k)\frac{k+1}{k})^{-k}$$
then the profile of $v(\xi)$ is a non-monotone wave.

\end{proof}
\begin{remark}
    Let $s=4.5$. Then for $\lambda_1=\lambda_2=\frac{s-\sqrt{s^2-4}}{2}\approx 0.234$, the pair of functions

\begin{equation*}
    \Bar{v}(\xi)=
     \begin{cases}
         1 \ & \text{ if } \xi \geq 0,\\
    e^{\lambda_2 \xi} \ & \text{ if } \xi <0,
    \end{cases}
\end{equation*}
and
\begin{align*}
    \underline{v}(\xi)=
     \begin{cases}
         \delta_2 \ & \text{ if } \xi \geq -5,\\
    e^{\lambda_2 \xi}-q_2e^{\mu_2 \lambda_2 \xi} \ & \text{ if } \xi \leq -5,
     \end{cases}
\end{align*}
are the sup-sub solutions of $(\ref{supsub})$ where we pick $q_2=2.6, \ \mu_2=1+\frac{1}{1.1} \approx 1.91$. By direct calculation, we have $\displaystyle\sup_{\xi \in \mathbb{R}}\underline{v}(\xi)\approx 0.0817 >v^*=\frac{2}{n+1}$ if $n \geq 24$. That is, for any $n \geq 24$, there exists a non-monotone wave. For Figure \ref{Fig 1} is the profile of $\Bar{v}(\xi), \underline{v}(\xi)$ and $v^*$.
\end{remark}

\begin{figure}[h]
\centering
\includegraphics[width=9cm]{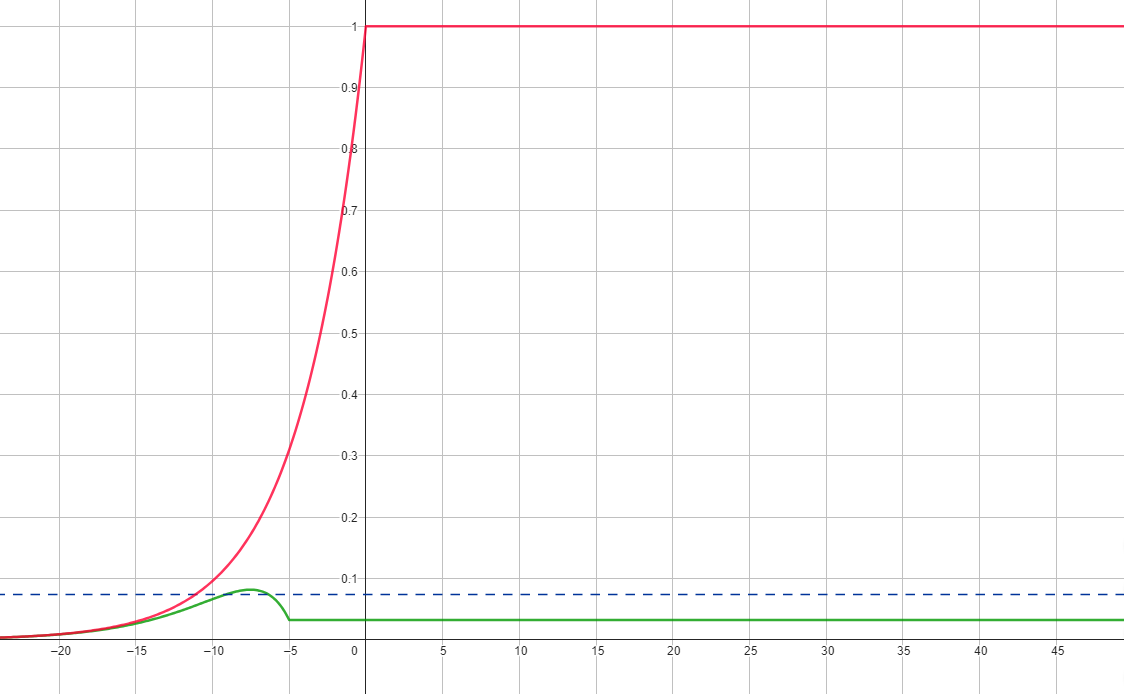}
\caption{When $n=26$, $\Bar{v}(\xi)$(red line),$\underline{v}(\xi)$(green line),$v^*=\frac{2}{27}$(blue dash line) are all labeled on the figure. There exists $v(\xi)$ lying between the red line and green line with $\displaystyle\lim_{\xi \to +\infty}v(\xi)=v^*$.}
\label{Fig 1}
\end{figure}

\begin{remark}
    Note that given any $\underline{s}>s^*$, for any $s \geq \underline{s}$ we can choose $\mu_2$ very close to $1$ such that $(\ref{maineq})$ and $(\ref{asymptotic behavior})$ has a non-monotone wave whenever $b$ is close to $a$. Figure \ref{Fig 2}. is the illustration.
\end{remark}

\begin{figure}[h]
\centering
\includegraphics[width=10cm]{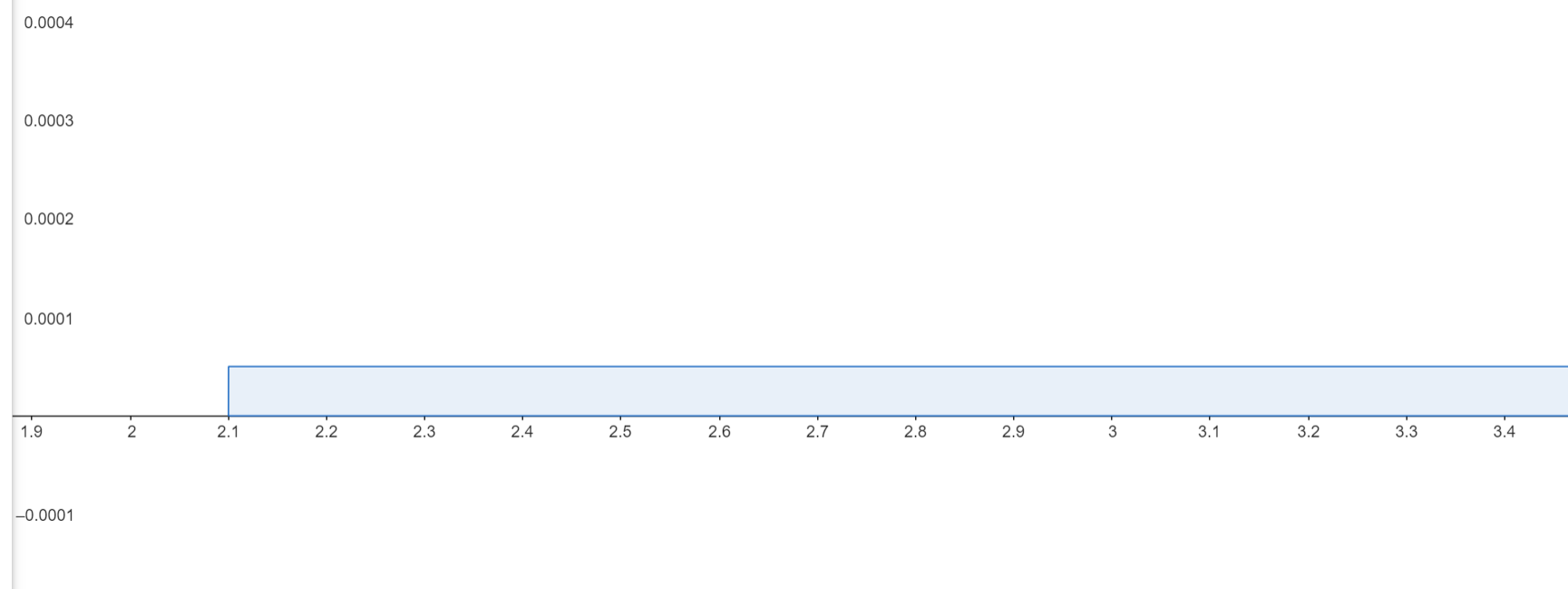}
\caption{Set $d=1, a=1, c=\frac{1}{2}, \underline{s}=2.1, \mu_2=1.001$. The horizontal axis is the wave speed $s$, and the vertical axis represents the difference $a-b>0$. The blue area illustrates the region where non-monotone solutions $v(\xi)$ exist.}
\label{Fig 2}
\end{figure}

\newpage

Moreover, on the non-monotone solution, we can prove the following basic property.

\begin{proposition}
    If $u(\xi)$ (resp., $v(\xi)$) is oscillating near $ +\infty$, then so is $v(\xi)$ (resp., $u(\xi)$).
\end{proposition}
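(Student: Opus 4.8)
The plan is to argue by contradiction: suppose $u(\xi)$ oscillates near $+\infty$ but $v(\xi)$ does not. Since $v$ is bounded, being eventually monotone forces $v(+\infty)$ to exist; by Lemma~\ref{shrinking} we already know $(u,v)(\xi)\to(u^*,v^*)$, so necessarily $v(\xi)\to v^*$. The idea is that this makes the $u$-equation asymptotically autonomous with a stable limiting reaction term, which is incompatible with sustained oscillation of $u$ around $u^*$.

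First I would extract from the oscillation of $u$ a sequence of local maxima $\xi_n\to+\infty$ with $u(\xi_n)>u^*$ (infinitely many exist, for otherwise $u^+=\limsup u\le u^*$ would combine with $u^-\le u^+$ and the convergence $u\to u^*$ to make $u$ eventually non-oscillatory, or one argues via minima on the other side). At such a local maximum, $u'(\xi_n)=0$ and $u''(\xi_n)\le 0$, so plugging $\xi=\xi_n$ into the first equation of~\eqref{maineq} gives
\[
0=u''(\xi_n)-su'(\xi_n)+u(\xi_n)\bigl(1-u(\xi_n)-cv(\xi_n)\bigr)\le u(\xi_n)\bigl(1-u(\xi_n)-cv(\xi_n)\bigr).
\]
Hence $1-u(\xi_n)-cv(\xi_n)\ge 0$, i.e. $u(\xi_n)\le 1-cv(\xi_n)$. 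Letting $n\to\infty$ and using $v(\xi_n)\to v^*$ yields $\limsup_n u(\xi_n)\le 1-cv^*=u^*$ (the last equality since $(u^*,v^*)$ solves $1-u^*-cv^*=0$). But $u(\xi_n)>u^*$ for all $n$, so $\limsup_n u(\xi_n)\ge u^*$, and we would need the oscillation amplitude above $u^*$ to shrink to zero. This alone does not yet give a contradiction, so a sharper comparison is needed near $+\infty$.

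To close the gap I would refine the argument using the sequence of local \emph{minima} $\eta_n\to+\infty$ of $u$, where $u'(\eta_n)=0$, $u''(\eta_n)\ge 0$, hence $u(\eta_n)\bigl(1-u(\eta_n)-cv(\eta_n)\bigr)\le 0$, giving $u(\eta_n)\ge 1-cv(\eta_n)\to u^*$; so $\liminf u\ge u^*$ along minima and $\limsup u\le u^*$ along maxima, forcing $u(\xi)\to u^*$. Now on a tail $\xi\ge K$ we have $v$ monotone, $v\to v^*$, $u\to u^*$, and I would exploit monotonicity of $v$ directly in the $v$-equation: integrating $dv''-sv'+v(a-bu-v)=0$ and using that $v'$ has a fixed sign while $v'(+\infty)=0$, the quantity $a-bu(\xi)-v(\xi)\to a-bu^*-v^*=0$, so one obtains $\int_K^\infty v(s)\bigl(a-bu(s)-v(s)\bigr)\,ds$ convergent with a sign forced by $v'$; combining with the oscillation of $u$ — which makes $a-bu(\xi)-v(\xi)$ change sign infinitely often with both signs — contradicts the fixed sign of $v'$ on the tail (a sign-changing integrand of nontrivial size cannot produce a monotone $v$ with $v'(+\infty)=0$ unless the amplitude decays, and then a Sturm-type oscillation comparison for the linearization of the $u$-equation about $u^*$, whose limiting coefficient $1-u^*-cv^*$ vanishes while the $-c(v(\xi)-v^*)$ perturbation is eventually sign-definite by monotonicity of $v$, shows $u$ cannot oscillate).

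The main obstacle I anticipate is precisely this last step: ruling out the degenerate scenario in which both $u-u^*$ and $v-v^*$ tend to zero but $u$ still oscillates with decaying amplitude. The clean way is a linearized oscillation argument — write $w=u-u^*$, so $w''-sw'-u(\xi)w - cu(\xi)(v-v^*)=0$ near $+\infty$; since $v$ is eventually monotone to $v^*$, the forcing term $-cu(\xi)(v(\xi)-v^*)$ is eventually of one sign, which combined with $u(\xi)\to u^*>0$ and a Sturm comparison (as used in Lemma~\ref{nonexist} and the Appendix) shows $w$ is eventually of one sign, contradicting oscillation of $u$. The symmetric statement (oscillation of $v$ forces oscillation of $u$) follows verbatim after swapping the roles of the two equations, using $a-bu^*-v^*=0$ in place of $1-u^*-cv^*=0$.
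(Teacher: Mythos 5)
Your opening computation is correct and is in fact the heart of the paper's own proof: at a local maximum $\xi_k^M$ of $u$ one gets $u(\xi_k^M)\le \frac{1-cv(\xi_k^M)}{1}\cdot 1$, i.e. $1-u(\xi_k^M)-cv(\xi_k^M)\ge 0$, and at a local minimum $\eta$ one gets $1-u(\eta)-cv(\eta)\le 0$. But the way you try to close the argument has a genuine gap. The integral step does not work: a convergent integral $\int_K^\infty v(a-bu-v)\,d\xi$ places no sign constraint on the integrand, and nothing forces $a-bu-v$ to change sign just because $u$ oscillates while $v$ is monotone. The final Sturm/linearization step is also flawed: your own extremum inequalities show that when $v$ increases to $v^*$ every local minimum of $u$ satisfies $u\ge 1-cv\ge u^*$, so $w=u-u^*$ is indeed eventually of one sign --- but a function of one sign can still oscillate (decaying oscillation above $u^*$), so ``$w$ eventually of one sign'' does not contradict oscillation of $u$. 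Moreover the limiting zeroth-order coefficient $1-u^*-cv^*$ vanishes, so the comparison you invoke is degenerate and is not actually carried out.

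The repair is much simpler than what you attempt, and it is exactly the paper's argument: no limits, no linearization. Take one consecutive pair consisting of a local minimum at $\eta$ followed by the next local maximum at $\xi>\eta$, so that $u(\eta)<u(\xi)$. Your two inequalities give $u(\eta)\ge 1-cv(\eta)$ and $u(\xi)\le 1-cv(\xi)$. If $v$ is eventually monotone increasing, then $v(\eta)\le v(\xi)$, hence
\[
u(\xi)\;\le\;1-cv(\xi)\;\le\;1-cv(\eta)\;\le\;u(\eta),
\]
contradicting $u(\eta)<u(\xi)$. If $v$ is eventually decreasing, apply the same comparison to a consecutive maximum-then-minimum pair. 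The symmetric statement for $v$ oscillating uses the $v$-equation in the same way. In short: you found the right pointwise inequalities but then reached for asymptotic machinery instead of comparing two adjacent extrema directly against the monotonicity of the other component.
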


\begin{proof}
    Assume that $u(\xi)$ is oscillating near $ +\infty$ and $v(\xi)$ is eventually monotone. If $v(\xi)$ is eventually monotone increasing at $+\infty$, then by assumption, there exists a sequence $I_k=[\xi_k^m,\xi_k^M]$ such that $u(\xi)$ is strictly monotonic increasing on $I_k$ with $u(\xi_k^m)$ is a local minimum, $u(\xi_k^M)$ a local maximum and $0<u(\xi_k^m) < u(\xi_k^M)$. By $(\ref{maineq})$, we have
\[
\begin{cases}
    u(\xi_k^M)(1-u(\xi_k^M)-cv(\xi_k^M)) \geq 0,\\
    u(\xi_k^m)(1-u(\xi_k^m)-cv(\xi_k^m)) \leq 0,
\end{cases}
\]
which is equivalent to
\begin{equation}\label{osci 1}
\begin{cases}
    v(\xi_k^M) \leq \frac{1-u(\xi_k^M)}{c},\\
    v(\xi_k^m) \geq \frac{1-u(\xi_k^m)}{c}.
\end{cases}
\end{equation}
Since $v$ is monotone increasing, $(\ref{osci 1})$ implies that $u(\xi_k^M) \leq u(\xi_k^m)$, a contradiction. Similarly, we can also show that $v(\xi)$ is not eventually monotone decreasing at $+\infty$.
\end{proof}

At present, for non-monotone wave, we can only characterize the above property. To determine the precise increasing–decreasing behavior, more refined tools are required, which is one of our future research directions. In fact, we can use this type of non-monotone wave to construct the front-pulse solution.

\section{Front-pulse waves at critical strong-weak} competition \label{Section 4}
We now consider two degenerate cases of our model under specific parameter settings.

First, when $b<a$ and $c=\frac{1}{a}$, the original system reduces to the following degenerate model:

\begin{equation} \label{degenerate1}
\begin{cases}
    u''-su'+u(1-u-\frac{1}{a}v)=0, \ \xi \in \mathbb{R},\\
    dv''-sv'+v(a-bu-v)=0,\\
    (u,v)(-\infty)=(0,0), \ (u,v)(+\infty)=(0,a).
\end{cases}
\end{equation}

It is straightforward to verify that for any $s\geq 2\sqrt{ad}$, the system $(\ref{degenerate1})$ admits a trivial traveling wave solution $(u(\xi),v(\xi))$, where $u(\xi)=0$ for all $\xi \in \mathbb{R}$ and $v(\xi)$ satisfies the classical Fisher-KPP equation:
\begin{equation} \label{vFKPP}
\begin{cases}
    dv''-sv'+v(a-v)=0,\\
    v(-\infty)=0, \ v(+\infty)=a.
\end{cases}
\end{equation}

On the other hand, when $b=a$ and $a<\frac{1}{c}$, the original system reduces to another degenerate model:

\begin{equation} \label{degenerate2}
\begin{cases}
    u''-su'+u(1-u-cv)=0, \ \xi \in \mathbb{R},\\
    dv''-sv'+v(a(1-u)-v)=0,\\
    (u,v)(-\infty)=(0,0), \ (u,v)(+\infty)=(1,0).
\end{cases}
\end{equation}
Similarly, for any $s\geq 2$, $(\ref{degenerate2})$ admits a trivial traveling wave solution $(u(\xi),v(\xi))$, where $v(\xi)=0$ for all $\xi \in \mathbb{R}$ and $u(\xi)$ satisfies the classical Fisher-KPP equation:
\begin{equation} \label{vFKPP}
\begin{cases}
    u''-su'+u(1-u)=0,\\
    u(-\infty)=0, \ u(+\infty)=1.
\end{cases}
\end{equation}

We call a non-trivial solution of $(\ref{degenerate1})$ or $(\ref{degenerate2})$ front-pulse solution. This type of front-pulse solution is very rare and interesting in biological models. We can obtain such type of solutions using Theorem $\ref{unonmonotone}$ and Theorem $\ref{vnonmonotone}$. To proceed with this construction, we first introduce the following lemmas.

\begin{lemma} 
    Given any $\Omega=[r,t] \subset \mathbb{R}$. Under the same assumption of Theorem $\ref{unonmonotone}$, the non-monotone solution $(u(\xi),v(\xi))$ of $(\ref{maineq})$ and $(\ref{asymptotic behavior})$ satisfies the following interior estimate
    $$||u||_{C^{3,\alpha}(\Omega)}, \ ||v||_{C^{3,\alpha}(\Omega)} \leq C(\Omega,\alpha,s,d,a,b),$$
    where $\alpha \in (0,1)$ and the constant is independent of $c$.
\end{lemma}

\begin{proof} 
Fix $s \geq \max\{2,2\sqrt{ad}\}$. We rewrite the $u$ equation as $Lu=f(\xi)$, where $Lu=u''-su'+u$ is a linear operator and $f(\xi):=u^2(\xi)+cu(\xi)v(\xi)$. 

First, by the maximum principle, we have $0<u(\xi)<1$ and $0<v(\xi)<a$ for all $\xi \in \mathbb{R}$. On the other hand, the source term $f(\xi)$ satisfies $|f(\xi)| \leq |u^2(\xi)|+u^2(\xi)+c|u(\xi)v(\xi)| \leq 2$ for all $\xi \in \mathbb{R}$. 

Given any $\Omega=[r,t]$. We pick a larger set $\hat{\Omega}=[r-1,t+1]$ and apply the $L^p$ theory. For any $1<p<\infty$, there exists $C_p(\Omega,s)$ such that
$$||u||_{W^{2,p}(\Omega)} \leq C_p(||f||_{L^p(\hat{\Omega})}+||u||_{L^p(\hat{\Omega})}).$$
Since $\hat{\Omega}$ is compact, $||u||_{L^p(\hat{\Omega})} \leq |\hat{\Omega}|^{\frac{1}{p}}$ and $||f||_{L^p(\hat{\Omega})} \leq 2|\hat{\Omega}|^{\frac{1}{p}}$. This implies $||u||_{W^{2,p}(\Omega)} \leq 3C_p|\hat{\Omega}|^{\frac{1}{p}}$. That is $||u||_{W^{2,p}(\Omega)}$ is bounded and the upper bound is independent on $c$.

Next, by the Sobolev embedding theorem, $W^{2,p}(\Omega) \hookrightarrow C^{1,\alpha}(\Omega)$ for $p>1$ and $\alpha=1-\frac{1}{p}$. We have
$$||u||_{C^{1,\alpha}(\Omega)} \leq C||u||_{W^{2,p}(\Omega)} \leq C_0$$
for some $C_0$ is independent on $c$. 

Finally, by the equation of $u$, we also have
$$||u''||_{C^0(\Omega)} \leq ||f||_{C^0(\Omega)}+|s|\cdot ||u'||_{C^0(\Omega)} \leq C_1$$
for some $C_1$ is independent on $c$. We finally obtain that $u \in C^2(\Omega)$. Similarly, consider $v$ equation, we also have $v \in C^2(\Omega)$. And both $C^2$ norm upper bound are independent on $c$.

Now, 
\begin{align*}
    ||f'(\xi)||_{C^0(\Omega)}&=||cu'(\xi)v(\xi)+cu(\xi)v'(\xi)+2u(\xi)u'(\xi)||_{C^0(\Omega)}\notag\\
    &\leq \frac{1}{a}||u'(\xi)v(\xi)+u(\xi)v'(\xi)||_{C^0(\Omega)}+2||u(\xi)u'(\xi)||_{C^0(\Omega)}\notag\\
    &\leq C_0(\Omega,\alpha,s,d,a,b).\notag
\end{align*}
is a finite number. Hence, $f \in C^1(\Omega)$ and the bound is independent on $c$. 

For $f(\xi) \in C^1(\Omega)$, applying the same argument as above, we have  $u$ and $v$ belong to  $C^{3,\alpha}(\Omega)$ with the bound is independent on $c$.

\end{proof}

    

Using a similar argument, we can establish the following result.

\begin{lemma} 
    Given any compact set $\Omega \subset \mathbb{R}$. Under the same assumption of Theorem $\ref{vnonmonotone}$, the non-monotone solution $(u(\xi),v(\xi))$ of $(\ref{maineq})$ and $(\ref{asymptotic behavior})$ satisfies the following interior estimate
    $$||u||_{C^{3,\alpha}(\Omega)}, \ ||v||_{C^{3,\alpha}(\Omega)} \leq C(\Omega,\alpha,s,d,a,c),$$
    where $\alpha \in (0,1)$ and the constant is independent of $b$.
\end{lemma}

We now turn to the proof of Theorem \ref{nontrivialu}.

\begin{proof}[Proof of Theorem \ref{nontrivialu}]
    Fix any $b<a, \ d>0, \ s \geq s^*$. By Theorem $\ref{unonmonotone}$, there exists a pair of positive sup-sub solution $(\overline{u},\underline{u}),(\overline{v},\underline{v})$ and $\delta>0$ such that if $\delta=\delta(a,b,s)<c<\frac{1}{a}$, then there exists a solution $(u(\xi),v(\xi))$ of $(\ref{maineq})$ and $(\ref{asymptotic behavior})$, where $u(\xi)$ is a non-monotone function. Note that we choose 
    $$||\underline{u}||_{\infty}=(1-\frac{1}{\mu_1})(q_1 \mu_1)^{\frac{-1}{\mu_1-1}}$$
    which is independent of $c$. For any $c_k \in (\delta,\frac{1}{a})$ with $c_k \to \frac{1}{a}$, there exists a non-monotone solution $(u_k(\xi),v_k(\xi))$ with
    $$||u_k||_{\infty}>||\underline{u}||_{\infty}>0.$$
    This implies that $u_k(\xi)$ has a positive lower bounded function $\underline{u}(\xi)$ with $||\underline{u}||_{\infty}$ which is independent of $c$. By the boundedness of the $C^{3,\alpha}$ norm of $(u_k,v_k)$, there exists a subsequence, $(u_{k_j}(\xi),v_{k_j}(\xi),c_{k_j})$ converges to  $(U(\xi),V(\xi),\frac{1}{a})$ in $C^2 \times C^2 \times \mathbb{R}$ on any compact set $[-M,M]$ as $j \to +\infty$, for some $(U,V)$ satisfies the limiting equation
    \begin{equation}
    \begin{cases}
        U''-sU'+U(1-U-\frac{1}{a}V)=0, \  \xi \in [-M,M]\\
        dV''-sV'+V(a-bU-V)=0,
    \end{cases}
    \end{equation}
    with $||U||_{\infty} \geq ||\underline{u}||_{\infty}>0$, and $||U||_{C^2(\Omega)} \leq C(\Omega,\alpha,s,d,a,b)$. Note that $(u_{k_j},v_{k_j})$ satisfies $(\ref{asymptotic behavior})$. When $M \to +\infty$, we can see that $(U,V)$ satisfies 
    $$\lim_{\xi \to -\infty}(U(\xi),V(\xi))=(0,0),$$
    and \ $ 0 \leq U(\xi) \leq 1, \  \delta_2 \leq V(\xi) \leq a$ for $\xi$ very large, where $U(\xi)$ is a non-trivial function. Now we claim that $\displaystyle\lim_{\xi \to +\infty} (U(\xi),V(\xi))=(0,a)$.

    Case 1. If $U(\xi)$ and $V(\xi)$ are eventually monotone, then assume $\displaystyle\lim_{\xi \to +\infty} (U(\xi),V(\xi))=(\alpha,\beta)$. It is easy to show that the derivatives vanish. Therefore, $\alpha, \beta$ satisfy 
\[\begin{cases}
    \alpha(1-\alpha-\frac{1}{a}\beta)=0,\\
    \beta(a-b\alpha-\beta)=0,
\end{cases}
\]
with $0 \leq \alpha \leq 1,$ and $\delta_2 \leq \beta \leq a$. This implies that $(\alpha,\beta)=(0,a)$.

\

Case 2. If $V(\xi)$ oscillates and $U(\xi)$ is eventually monotonic at $+\infty$, and let $\displaystyle\lim_{\xi \to +\infty}U(\xi)=\alpha$. By assumption, there exist a sequence $I_k=[\xi_k^m,\xi_k^M]$ such that $V(\xi)$ is monotone increasing on $I_k$, $V(\xi_k^m)$ is a local minimum, $V(\xi_k^M)$ is a local maximum and $V(\xi_k^m) \leq V(\xi_k^M)$. By $(\ref{maineq})$, we have
\[
\begin{cases}
    V(\xi_k^M)(a-bU(\xi_k^M)-V(\xi_k^M)) \geq 0,\\
    V(\xi_k^m)(a-bU(\xi_k^m)-V(\xi_k^m)) \leq 0,
\end{cases}
\]
which is equivalent to
\begin{equation}\label{degenerate Case 2}
\begin{cases}
    U(\xi_k^M) \leq \frac{a-V(\xi_k^M)}{b},\\
    U(\xi_k^m) \geq \frac{a-V(\xi_k^m)}{b}.
\end{cases}
\end{equation}

Note that $(\ref{degenerate Case 2})$ implies that $U(\xi)$ is monotone decreasing. If not, we have
$$\frac{a-V(\xi_k^m)}{b} \leq U(\xi_k^m) \leq U(\xi_k^M) \leq \frac{a-V(\xi_k^M)}{b},$$
and we have $V(\xi_k^M) \leq V(\xi_k^m)$, a contradiction. Since $\displaystyle\lim_{\xi \to +\infty}U(\xi)=\alpha$, $(\ref{degenerate Case 2})$ shows that $\displaystyle\lim_{\xi \to +\infty}V(\xi)=a-b\alpha:=\beta$ exist. If $\alpha>0$, then
$$\lim_{\xi\to+\infty}U(\xi)(1-U(\xi)-\frac{1}{a}V(\xi))=\frac{(b-a)\alpha^2}{a}<0.$$
On the other hand, since $U(\xi)$ is monotone decreasing, we can show that $\displaystyle\lim_{\xi \to +\infty}U''(\xi)=0$ and $\displaystyle\lim_{\xi \to +\infty}U'(\xi)=0$. Therefore,
$$0=\lim_{\xi \to +\infty}(U''(\xi)-sU'(\xi)+U(\xi)(1-U(\xi)-\frac{1}{a}V(\xi)))<0,$$
a contradiction. Thus, $\alpha=0$ and $\beta=a$.

\

Case 3. If $U(\xi)$ oscillates and $V(\xi)$ is eventually monotonic at $+\infty$, and let $\displaystyle\lim_{\xi \to +\infty}V(\xi)=\beta$.

Case 3.1. When $\beta=a$, then we can apply similar argument to show that $\displaystyle\lim_{\xi \to +\infty}U(\xi)=0$.

\

Case 3.2. If $\beta<a$, then either $V$ is monotone increasing or decreasing, we can show that $\displaystyle\lim_{\xi \to +\infty}V''(\xi)=\displaystyle\lim_{\xi \to +\infty}V'(\xi)=0$. Thus, by $(\ref{maineq})$, we have
\begin{equation}
    0=\lim_{\xi \to +\infty}V(\xi)(a-bU(\xi)-V(\xi)). \notag 
\end{equation}
This implies
\begin{equation}
    \lim_{\xi \to +\infty}U(\xi)=\lim_{\xi \to +\infty}(V(\xi)U(\xi))(\frac{1}{V(\xi)})=\frac{a-\beta}{b}>0.\notag
\end{equation}
Since $\displaystyle\lim_{\xi \to +\infty}U(\xi)$ exists, and it deduces to Case 1.

\

Case 4. If both $U(\xi)$ and $V(\xi)$ oscillate at $+\infty$. 

Case 4.1. If $\displaystyle\lim_{\xi \to +\infty}V(\xi)=a$. For any local maximum points $\xi_k^M$, of $U$. By $(\ref{maineq})$, we have
\begin{equation}
    U(\xi_k^M)(1-U(\xi_k^M)-\frac{1}{a}V(\xi_k^M)) \geq 0,\notag
\end{equation}
which is equivalent to
\begin{equation}
     0 \geq -U^2(\xi_k^M) \geq U(\xi_k^M)(\frac{1}{a}V(\xi_k^M)-1).\notag
\end{equation}
By the Squeeze Theorem, we have $\displaystyle\limsup_{\xi \to +\infty}U(\xi)=0$. This implies that $\displaystyle\lim_{\xi \to +\infty}U(\xi)=0$.

Case 4.2. If $\displaystyle\liminf_{\xi \to +\infty}V(\xi)=\beta<a$, and $\displaystyle\limsup_{\xi \to +\infty}U(\xi)=\alpha>0$. For any small $\frac{1}{n}>0$ there exist $\xi_n$ which be the local minimum points of $V$ such that $V(\xi_n) \leq \beta+\frac{1}{n}$. By $(\ref{maineq})$, we have
\begin{equation}
   0 \geq a-bU(\xi_n)-V(\xi_n) \geq a-bU(\xi_n)-\beta-\frac{1}{n},\notag
\end{equation}
which implies that
\begin{equation}
      U(\xi_n) \geq \frac{a-\beta-\frac{1}{n}}{b}.\notag
\end{equation}
Therefore, $\displaystyle\limsup_{\xi \to +\infty}U(\xi)=\alpha \geq \frac{a-\beta}{b}$. On the other hand, up to subsequence, for any small $\frac{1}{k}>0$, there exist the local maximum points, $\xi_k$,  of $U$ such that $0<\alpha-\frac{1}{k}<U(\xi_k)<\alpha+\frac{1}{k}$. Again, by $(\ref{maineq})$,  we have
\begin{equation}
   0 \leq 1-U(\xi_k)-\frac{1}{a}V(\xi_k)<1-\alpha+\frac{1}{k}-\frac{1}{a}V(\xi_k) \leq 1-\frac{a-\beta}{b}+\frac{1}{k}-\frac{\beta}{a}.\notag
\end{equation}
Let $k \to +\infty$, we then obtain that
\begin{equation}
    0 \leq 1-\frac{a}{b}+\frac{\beta}{b}-\frac{\beta}{a},\notag
\end{equation}
which is equivalent to
\begin{equation}
    a(\frac{1}{b}-\frac{1}{a}) \leq \beta (\frac{1}{b}-\frac{1}{a}), \notag
\end{equation}
a contradiction. Hence, $\displaystyle\lim_{\xi \to +\infty}V(\xi)=a$ and $\displaystyle\lim_{\xi \to +\infty}U(\xi)=0$.

\end{proof}

\begin{remark}
    As $c_{k_j} \to \frac{1}{a}$, the definition of $\underline{u}(\xi)$ changes. Due to the inequality $0<\delta_{1_{k_j}}<u^*_{k_j}$, we in fact have $\displaystyle\lim_{j \to +\infty}\delta_{k_j}=0$. However, this does not affect the maximum value $||\underline{u}||_{\infty}$. 
\end{remark}

\begin{proposition} \label{v pulse}
    For any $a<\frac{1}{c}, \ a=b, \ d>0$. If $s \geq s^*$, there exists a non-trivial front-pulse solution of $(\ref{degenerate2})$ with $v(\xi) \to 0$ as $|\xi|\to +\infty$.
\end{proposition}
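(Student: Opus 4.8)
The plan is to transcribe the proof of Theorem~\ref{nontrivialu}, interchanging the roles of $u$ and $v$ and of the parameters $c$ and $b$, and letting $b \uparrow a$ in place of $c \uparrow 1/a$. Fix $a,c$ with $a<1/c$, fix $d>0$ and $s\ge s^*$. By Theorem~\ref{vnonmonotone} there is $\delta(a,c,s)<a$ so that for each $b\in(\delta(a,c,s),a)$ the problem $(\ref{maineq})$--$(\ref{asymptotic behavior})$ admits a non-monotone solution $(u_b,v_b)$, with $v_b$ non-monotone, caught between the sub- and super-solutions $(\underline u,\overline u,\underline v,\overline v)$ built for that theorem. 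The first thing I would check is that these can be chosen with $b$-dependence controlled uniformly as $b\uparrow a$: the functions $\overline u\equiv 1$ on $[0,\infty)$ and $\underline u\equiv\delta_1>0$ on $[\xi_1,\infty)$ do not involve $b$ at all, while in $\underline v$ the only $b$-dependence enters the choice of $q_2$ through bounds such as $\frac{a^2+ab}{-d(\mu_2\lambda_2)^2+s(\mu_2\lambda_2)-a}$ (resp.\ the analogous quantities when $s=s^*$), which are dominated by their value at $b=a$; hence $q_2$, and with it the bump height $\|\underline v\|_\infty$, may be fixed independently of $b$, whereas the flat part $\delta_{2,b}$ of $\underline v$ is forced below $a-b$ and therefore tends to $0$ as $b\uparrow a$. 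This last fact is precisely the pulse mechanism, replacing the remark following the proof of Theorem~\ref{nontrivialu}.

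Taking $b_k\uparrow a$, for $k$ large $v^*_{b_k}=\frac{a-b_k}{1-b_kc}<\|\underline v\|_\infty\le\|v_{b_k}\|_\infty$, which re-confirms non-monotonicity of $v_{b_k}$ and gives a uniform lower bound $\|v_{b_k}\|_\infty\ge\|\underline v\|_\infty>0$. The interior estimate established under the hypotheses of Theorem~\ref{vnonmonotone} (with $C^{3,\alpha}_{\mathrm{loc}}$ bounds independent of $b$) then lets me extract, by a diagonal argument over an exhaustion $[-M,M]$, a subsequence $(u_{k_j},v_{k_j},b_{k_j})\to(U,V,a)$ in $C^2_{\mathrm{loc}}(\mathbb{R})$, where
\begin{equation*}
U''-sU'+U(1-U-cV)=0,\qquad dV''-sV'+V\big(a(1-U)-V\big)=0\quad\text{on }\mathbb{R},
\end{equation*}
with $0\le U\le 1$, $0\le V\le a$, $U\ge\delta_1$ on $[\xi_1,\infty)$, $\|V\|_\infty\ge\|\underline v\|_\infty>0$, and $\lim_{\xi\to-\infty}(U,V)=(0,0)$ (the latter by passing the sandwich to the limit and squeezing). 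Since $V\ge 0$ solves a linear-type ODE, $V\not\equiv0$ forces $V>0$ on all of $\mathbb{R}$.

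It remains to show $\lim_{\xi\to+\infty}(U,V)=(1,0)$, which I would obtain by the same four-case analysis as in Theorem~\ref{nontrivialu}. If $U$ and $V$ are both eventually monotone, their limits $(\alpha,\beta)$ satisfy $\alpha(1-\alpha-c\beta)=0$ and $\beta(a(1-\alpha)-\beta)=0$; since $\alpha\ge\delta_1>0$ and $ac\ne1$, this forces $(\alpha,\beta)=(1,0)$. If $V$ oscillates and $U$ is eventually monotone, evaluating the $V$-equation at consecutive local minima and maxima of $V$ shows $U$ is eventually decreasing and $\lim V=a(1-\lim U)$, and then the $U$-equation with $\lim U\ge\delta_1>0$ and $ac\ne1$ again gives $(1,0)$. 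If $U$ oscillates and $V$ is eventually monotone with $\lim V=\beta$: when $\beta=0$, evaluating the $U$-equation at local maxima/minima of $U$ and squeezing gives $\lim U=1$; when $\beta>0$, the $V$-equation forces $U\to1-\beta/a$, reducing to the first case and a contradiction. Finally, if both oscillate, evaluating the $U$-equation at local extrema of $U$ and the $V$-equation at local extrema of $V$ and passing to $\limsup$/$\liminf$ yields $1-c\limsup_{\xi\to+\infty}V\le\liminf_{\xi\to+\infty}U\le1-\tfrac{1}{a}\limsup_{\xi\to+\infty}V$; if $\limsup_{\xi\to+\infty}V>0$ this would give $c\ge1/a$, contradicting $a<1/c$, so $V\to0$ at $+\infty$, and the local-extrema inequalities for $U$ then force $U\to1$. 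Letting $M\to\infty$ throughout produces the desired non-trivial front-pulse solution of $(\ref{degenerate2})$ with $V(\pm\infty)=0$.

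I expect the only genuinely new point — everything else being a transcription of Theorem~\ref{nontrivialu} with $ac<1$ playing the role $b<a$ played there — to be the uniform-in-$b$ lower bound on $\|\underline v\|_\infty$ combined with $\delta_{2,b}\to0$: one must make sure that driving $b$ to $a$ does not simultaneously shrink the bump of $\underline v$, for otherwise the limit $V$ could vanish identically and the pulse would be lost.
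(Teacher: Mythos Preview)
Your proposal is correct and follows precisely the approach the paper indicates --- the paper merely states that the proof is very similar to that of Theorem~\ref{nontrivialu} and omits the details. You have also correctly isolated the only point requiring care beyond straight transcription, namely the uniform-in-$b$ lower bound on $\|\underline v\|_\infty$ (together with $\delta_{2,b}\to 0$), which is exactly what prevents the limiting $V$ from collapsing to the trivial solution.
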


The proof is very similar to the proof of Theorem $\ref{nontrivialu}$. We omit the details here.

\section{Acknowledgments}
Chiun-Chuan Chen
is supported by the National Science and Technology Council, Taiwan (Grant Number
114-2115-M-002 -004 -MY3) and the National Center for Theoretical Sciences, Taiwan
(NCTS). Ting-Yang Hsiao is supported by the ERC CONSOLIDATOR GRANT 2023 ``Generating Unstable Dynamics in dispersive Hamiltonian fluids'' , Project Number: 101124921.
 Views and opinions expressed are, however, those of the authors only and do not necessarily reflect those of the European Union or the European Research Council. Neither the European Union nor the granting authority can be held responsible for them.

\appendix

\section{Proof of nonexistence for $s\in(0,s^*)$} \label{Appendix A}

For readers' convenience, we provide a direct proof of the nonexistence of traveling waves when $0<s<s^*$.

\begin{theorem}
   When $0< s < s^*$, there is no positive solution $(u,v)$ of the equation $(\ref{maineq})$ and $(\ref{asymptotic behavior})$.
\end{theorem}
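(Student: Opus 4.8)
The plan is to show that for $0<s<s^*$ no positive solution of \eqref{maineq} can satisfy the left boundary condition $(u,v)(-\infty)=(0,0)$, by examining the linearized behavior near the trivial state via a Sturm--Liouville comparison argument. Since the system decouples at leading order near $(0,0)$ — the $u$-equation behaves like $u''-su'+u\approx 0$ and the $v$-equation like $dv''-sv'+av\approx 0$ — the obstruction to a nonnegative decaying tail is the classical fact that the scalar linear ODE $\phi''-s\phi'+\phi=0$ (resp. $d\phi''-s\phi'+a\phi=0$) has oscillatory solutions precisely when $s^2<4$ (resp. $s^2<4ad$), i.e. when $s<s^*$. A positive solution tending to zero at $-\infty$ would have to match such an oscillatory profile and hence change sign, contradicting positivity.

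Concretely, I would first reduce to the case where $u$ itself is the offending component: since $s<s^* = \max\{2,2\sqrt{ad}\}$, at least one of $s<2$ or $s<2\sqrt{ad}$ holds, and by the $u\leftrightarrow v$ symmetry of the argument we may assume $s<2$ and work with the $u$-equation (the $s<2\sqrt{ad}$ case is identical after rescaling $v$). Next, using $(u,v)(-\infty)=(0,0)$, fix $K$ large so that $1-u(\xi)-cv(\xi)\ge 1-\varepsilon$ for all $\xi\le -K$, with $\varepsilon$ small enough that $s^2<4(1-\varepsilon)$. Then on $(-\infty,-K]$ the function $u>0$ satisfies the differential inequality
\begin{equation}\label{appdiffineq}
u''-su'+(1-\varepsilon)u\le u''-su'+u(1-u-cv)=0.
\end{equation}
Now compare $u$ with a solution $\phi$ of the constant-coefficient equation $\phi''-s\phi'+(1-\varepsilon)\phi=0$; since $s^2<4(1-\varepsilon)$, every nontrivial solution $\phi$ is of the form $e^{s\xi/2}(A\cos\omega\xi+B\sin\omega\xi)$ with $\omega>0$ and therefore has infinitely many zeros accumulating at $-\infty$. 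A standard Sturm comparison (or a direct Wronskian computation after the substitution $w=e^{-s\xi/2}u$, which turns \eqref{appdiffineq} into $w''+(1-\varepsilon-s^2/4)w\le 0$ with a strictly positive coefficient) then forces $w$, hence $u$, to vanish somewhere on $(-\infty,-K]$, contradicting $u>0$ on $\mathbb{R}$.

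The routine part is the Sturm--Liouville oscillation lemma itself; the one place requiring a little care is the passage from the nonlinear equation to the clean differential inequality \eqref{appdiffineq}, i.e. justifying that $u$ and $v$ are genuinely small (not merely $\liminf$-small) on an entire half-line $(-\infty,-K]$ — this uses the full boundary condition $(u,v)(-\infty)=(0,0)$ rather than just boundedness, and it is exactly where the hypothesis that we have a bona fide traveling wave connecting to $(0,0)$ enters. I expect the main obstacle to be stating the oscillation comparison in a way that applies to the \emph{sub}solution $w$ of $w''+q(\xi)w\le 0$ with $q\equiv 1-\varepsilon-s^2/4>0$: here one notes that if $w>0$ on $(-\infty,-K]$ then $(w'/w)' \le -q - (w'/w)^2 \le -q$, so $w'/w\to +\infty$ as $\xi\to-\infty$ along the half-line, which is incompatible with $w\to 0^+$; alternatively one integrates against $\sin(\sqrt{q}(\xi-\xi_0))$ over a half-period to extract the sign change directly. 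Either route closes the argument.
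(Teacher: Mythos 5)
Your proposal is correct and follows essentially the same route as the paper's Appendix~\ref{Appendix A}: use the decay at $-\infty$ to bound $1-u-cv$ from below by a constant exceeding $s^2/4$ on a left half-line, pass to $w=e^{-s\xi/2}u$ so that $w''+q(\xi)w\le 0$ with $q\ge\epsilon>0$, and conclude by Sturm oscillation comparison against $\sin(\sqrt{\epsilon}\,\xi)$ on a half-period. The only differences are cosmetic (you phrase the nonlinear terms as a differential inequality and offer a Riccati/concavity alternative, while the paper keeps the exact equation and integrates the Wronskian identity), and your explicit reduction to ``$s<2$ or $s<2\sqrt{ad}$'' matches the paper's ``WLOG $s^*=2$.''
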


\begin{proof}
     Without loss of generality, we assume $s^*=2$. For contradiction, if for some $0<s<2$ there exists a positive solution $(u,v)(\xi)$ of $(\ref{maineq})$ and $(\ref{asymptotic behavior})$. Now , we define the positive function $w(\xi)=e^{\frac{-s \xi}{2}}u(\xi)$. By the product rule,
     \[
     w''(\xi)=e^{\frac{-s\xi}{2}}[\frac{s^2}{4}u-su'(\xi)+u''(\xi)]
     \]
     Then by the $u$-equation of $(\ref{maineq})$, $w(\xi)$ satisfies
    \begin{equation}{\label{wequation}}
        w''(\xi)+w(\xi)(1-\frac{s^2}{4}-u(\xi)-cv(\xi))=0.
    \end{equation}
    Since $0<s<2$ and $(u,v)$ tends to zero as $\xi \to -\infty$, there exist small $\epsilon>0$ and $-L<0$ such that
    $$(1-\frac{s^2}{4}-u(\xi)-cv(\xi))>\epsilon, \ \text{ for all }\xi<-L.$$
    We define an auxiliary function $\phi(\xi)=\sin(\sqrt{\epsilon}\xi)$ which is a positive solution of the following linear boundary value problem
    \begin{equation}\label{phiequation}
    \begin{cases}
        \phi''+\epsilon\phi=0,\\
        \phi(\frac{-2M\pi}{\sqrt{\epsilon}})=\phi(\frac{-(2M-1)\pi}{\sqrt{\epsilon}})=0,
    \end{cases}
    \end{equation}
    where $M\in \mathbb{N}$ is a large number such that $\frac{-(2M-1)\pi}{\sqrt{\epsilon}}<-L$. Set $\frac{-2M\pi}{\sqrt{\epsilon}}=\xi_1, \ \frac{-(2M-1)\pi}{\sqrt{\epsilon}}=\xi_2$. Multiply equation $(\ref{wequation})$ by $\phi(\xi)$ and multiply equation $(\ref{phiequation})$ by $w(\xi)$, then subtract equation $(\ref{phiequation})$ from equation $(\ref{wequation})$. We have
    \begin{equation}
        w''(\xi)\phi(\xi)-\phi''(\xi)w(\xi)+w(\xi)\phi(\xi)(1-\frac{s^2}{4}-u(\xi)-cv(\xi)-\epsilon)=0\notag
    \end{equation}
    Integrating both sides of the equation from $\xi_1$ to $\xi_2$, we then obtain that
    \begin{equation}\notag
        \left[w'(\xi)\phi(\xi)-\phi'(\xi)w(\xi)\right]\Big|_{\xi_1}^{\xi_2}+\int_{\xi_1}^{\xi_2}w(\xi)\phi(\xi)(1-\frac{s^2}{4}-u(\xi)-cv(\xi)-\epsilon)d\xi=0 .
    \end{equation}
    The first term is positive, and the second term is also positive, a contradiction.
\end{proof}

\begin{center}
\bibliographystyle{alpha}
\bibliography{Bibjournal.bib}
\end{center}

\end{CJK}
\end{document}